\newtheorem{theorem}{Theorem}[section]
\newtheorem{lemma}[theorem]{Lemma}
\newtheorem{corollary}[theorem]{Corollary}
\newtheorem{proposition}[theorem]{Proposition}
\theoremstyle{definition}  
\newtheorem{definition} [theorem] {Definition} 
\newtheorem{example} [theorem] {Example}
\newtheorem{remark} [theorem] {Remark}
\newcommand{\C}{{\mathbb{C}}}
\newcommand{\Q}{{\mathbb{Q}}}
\newcommand{\Z}{{\mathbb{Z}}}
\newcommand{\Zi}{{\mathbb{Z}[i]}}
\newcommand{\LL}{{\Lambda}}
\newcommand{\OO}{{\mathcal{O}}}
\newcommand{\fp}{{\mathfrak{P}}}
\newcommand{\vf}{\varphi}
\newcommand{\vp}{\varpi}
\newcommand{\ve}{\varepsilon}
\newcommand{\gal}{\mathrm{Gal}}
\newcommand{\db}{\delta_\beta}
\newcommand\res[1]{{\lower1pt\hbox{$|$}}_{\raise.5pt\hbox{${\scriptstyle #1}$}}}
\begin{document}

\title{The Galois Theory of the Lemniscate}

\author{David A.\ Cox}
\address{Department of Mathematics, Amherst
College, Amherst, MA 01002-5000, USA}
\email{dac@math.amherst.edu}

\author{Trevor Hyde}
\address{Department of Mathematics, Amherst
College, Amherst, MA 01002-5000, USA}
\email{thyde641@gmail.com}

\begin{abstract}
This article studies the Galois groups that arise from division points
of the lemniscate.  We compute these Galois groups two ways: first, by
class field theory, and second, by proving the irreducibility of
\emph{lemnatomic polynomials}, which are analogs of cyclotomic
polynomials.  We also discuss Abel's theorem on the lemniscate and
explain how lemnatomic polynomials relate to Chebyshev polynomials.
\end{abstract}

\keywords{lemniscate, Galois group, Chebyshev polynomial} 
\subjclass[2010]{11G15 (primary), 11R37, 14K22,
 33E05 (secondary)}

\maketitle

\section{Introduction}

The lemniscate is the curve defined by the polar equation $r^2 =
\cos(2\theta)$: 
\[
\includegraphics[height=1.5in]{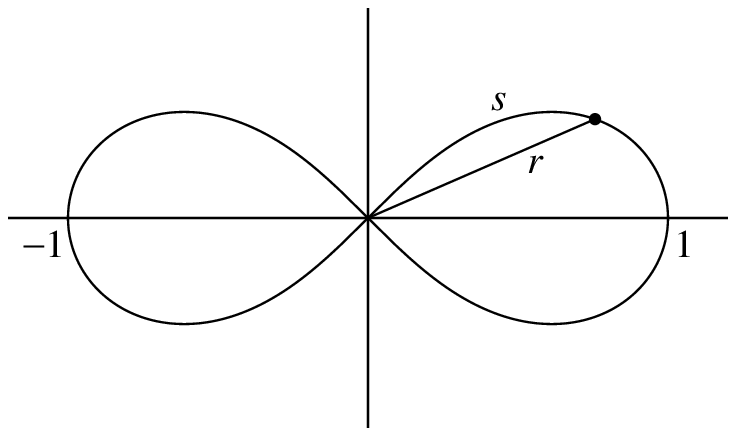}
\]
In the first quadrant, the arc length $s$ is related to the radial
distance $r$ by the elliptic integral
\begin{equation}
\label{arclength}
s = \int_0^r \frac{dt}{\sqrt{1-t^4}}.
\end{equation}
The arc length of the first-quadrant portion is $\int_0^1
\frac{dt}{\sqrt{1-t^4}}$, which we denote by $\varpi/2$ in analogy
with $\pi/2 = \int_0^1 \frac{dt}{\sqrt{1-t^2}}$.  Thus $2\varpi$ is
the length of the entire lemniscate.

Following Abel, the inverse function of \eqref{arclength} is denoted
$\varphi(s) = r$.  The goal of this paper is to compute the Galois
group
\begin{equation}
\label{galgrp}
\mathrm{Gal}(K(\varphi({\textstyle\frac{2\varpi}{n}}))/K),
\end{equation}
where $K = \Q(i)$ and $n$ is a positive odd integer.  Geometrically,
$\varphi(\frac{2\varpi}{n})$ tells us how to find the first
$n$-division point of the lemniscate, and the size of the Galois group
\eqref{galgrp} determines whether or not we can divide the lemniscate
into $n$ pieces by ruler and compass.  This will lead to a quick proof
of Abel's theorem on the lemniscate, which characterizes those $n$'s
for which this can be done (see Section~\ref{AbelThmSec} for a precise
statement).

Our main result is an isomorphism
\begin{equation}
\label{mainison}
\mathrm{Gal}(K(\varphi({\textstyle\frac{2\varpi}{n}}))/K) \simeq
\big(\mathbb{Z}[i]/n\mathbb{Z}[i]\big)^\times 
\end{equation}
when $n > 0$ is odd.  We will give two proofs:
\begin{itemize}
\item The first proof uses class field theory and complex
  multiplication.  The key step is to show that
  $K(\varphi(\frac{2\varpi}{n}))$ is the ray class field of $K =
  \mathbb{Q}(i)$ for the modulus $2(1+i)n$.
\item The second proof is more elementary and uses \emph{lemnatomic
  polynomials}, which are analogs of cyclotomic polynomials.  The key
  step is to prove that lemnatomic polynomials are irreducible.
\end{itemize}

Here is a brief summary of the paper.  Section~\ref{LemFnSec} explains
how $\varphi(s)$ extends to an elliptic function $\varphi(z)$ and
describes the period lattice $L$ and associated elliptic curve $E$.  We
also recall how complex multiplication by $\mathbb{Z}[i]$ gives
explicit formulas for $\varphi(\beta z)$ when $\beta \in
\mathbb{Z}[i]$.  Section~\ref{PrelimSec} introduces the field
$K_\beta = K(\vf(\frac{2\vp}{\beta}))$ for $\beta \in \Zi$
relatively prime to $1+i$ (we say that $\beta$ is \emph{odd}) and
constructs an injection
\begin{equation}
\label{Kbetainj}
\mathrm{Gal}(K_\beta/K)\hookrightarrow (\Z[i]/\beta\Z[i])^\times.
\end{equation}
We also give some alternate descriptions of $K_\beta$ that clarify its
relation to the elliptic curve $E$.  In Section~\ref{CFTSec} we
identify $K_\beta$ as the ray class field of $K$ for the modulus
$2(1+i)\beta$ when $\beta \in \Z[i]$ is odd and use class field theory
and complex multiplication to prove that \eqref{Kbetainj} is an
isomorphism.  This gives our first proof of \eqref{mainison}.
Section~\ref{LemPolySec} defines the lemnatomic polynomial
$\Lambda_\beta$ and proves its irreducibility over $K$ by an elementary
argument, leading to our second proof of \eqref{mainison}.  We also
determine the degree and constant term of $\Lambda_\beta$.
Section~\ref{AbelThmSec} shows how Abel's theorem on the lemniscate
follows from the irreducibility of lemnatomic polynomials, and the
final Section~\ref{ChebySec} explores a surprisingly strong analogy
between lemnatomic polynomials and irreducible factors of Chebyshev
polynomials.

\section{The Complex Lemniscatic Function}
\label{LemFnSec}

As explained in \cite[Ch.\ 15]{galois}, the function $\varphi(s)$ from 
the Introduction extends to a function of period $2\varpi$ on
$\mathbb{R}$ and satisfies the addition law
\begin{equation}
\label{addlaw}
\varphi(x+y) = \frac{\varphi(x)\varphi'(y) +
  \varphi(y)\varphi'(x)}{1+\varphi(x)^2\varphi(y)^2} 
\end{equation}
and the differential equation
\begin{equation}
\label{diffeq}
\varphi'(s)^2 = 1 - \varphi(s)^4. 
\end{equation}

To extend $\varphi$ to $\mathbb{C}$, note that the integral
\eqref{arclength} suggests defining $\varphi(iy) = i\varphi(y)$.  This
and \eqref{addlaw} enable us to define
\[
\varphi(z) = \varphi(x+iy) = \frac{\varphi(x)\varphi'(y) + i
  \varphi(y)\varphi'(x)}{1-\varphi(x)^2\varphi(y)^2}. 
\]
By \cite[Sec.\ 15.3]{galois}, $\vf$ is an elliptic function for the
lattice 
\begin{equation}
\label{Ldef}
L = \Z(1+i)\vp+\Z(1-i)\vp
\end{equation}
whose zeros (all simple) are at
points $\equiv 0,\vp \bmod L$ and whose poles (also simple) are at
points $\equiv (1\pm i)\vp/2 \bmod L$.  Furthermore, $\vf$ is an odd
function and 
\begin{equation}
\label{vfzvfw}
\vf(z) = \vf(w) \iff z = (-1)^{m+n} w + (m+in)\vp,\ m,n
\in \Z.
\end{equation}

For the rest of the paper, we will use the notation
\[
K = \Q(i),\qquad
\OO = \Zi.
\] 
Earlier we defined $\beta \in \OO$ to be \emph{odd} if it is
relatively prime to $1+i$.  Note that
\begin{align*}
\beta \in \OO \text{ is odd} &\iff \beta = m+in,\ m,n \in \Z,\ m+n
\text{ odd}\\ &\iff \beta \equiv i^\varepsilon \bmod 2(1+i) \text{ for
  some } \varepsilon \in \{0,1,2,3\}.
\end{align*}
We say that $\beta \in \OO$ is  \emph{even} if it is not odd, i.e., if
$1+i$ divides $\beta$. 

If $\beta \in \OO$ is nonzero and $L$ is the period
lattice \eqref{Ldef}, then note that
\[
{\textstyle\frac1\beta} L /L \simeq \OO/\beta\OO
\]
as $\OO$-modules.  We say that $\delta \in \frac1\beta L$ \emph{is a
  $\beta$-torsion generator} if $[\delta] \in \frac1\beta L /L$
generates $\frac1\beta L /L$ as an $\OO$-module.  Any two
$\beta$-torsion generators $\delta,\delta'$ satisfy $\delta \equiv
\alpha\delta' \bmod L$ for some $[\alpha] \in
(\OO/\beta\OO)^\times$. We will use the $\beta$-torsion generator $\db
= \frac{(1+i)\vp}{\beta}$ frequently.

It is clear that $\vf$ has complex multiplication by $\OO$.  Here is a
precise description of what this means.

\begin{theorem}
\label{phiPropsThm}
Let $\beta\in \OO$ be odd. Then there exist relatively prime
polynomials $P_\beta(x), Q_\beta(x)\in \OO[x]$ and $\varepsilon \in
\{0,1,2,3\}$ such that
\begin{enumerate} 
\item For all $z\in\C$, $\vf(\beta z) = M_\beta(\vf(z))$, where
\[
M_\beta(x) = i^\varepsilon x\,\frac{P_\beta(x^4)}{Q_\beta(x^4)}.
\]
\item $\beta \equiv i^\varepsilon \bmod{2(1+i)}$.
\item $P_\beta(x)$ and $Q_\beta(x)$ have degree
  $(\mathrm{N}(\beta)-1)/4$, where $\mathrm{N}(\beta) =
  \beta\overline{\beta}$ is the norm of $\beta$. 
\item The \textbf{\boldmath{$\beta$}-division polynomial}
  $xP_\beta(x^4)$ has $\mathrm{N}(\beta)$ distinct roots given by 
  $\vf(\alpha\delta)$ for $[\alpha ] \in \OO/\beta \OO$ and $\delta\in
  \frac1\beta L$ a fixed $\beta$-torsion generator. 
\item $P_\beta(x)$ is monic and $Q_\beta(x) =
  x^{(\mathrm{N}(\beta)-1)/4}\,P_\beta(1/x)$.
\item Suppose $\pi$ is an odd prime in $\OO$ and let
  $d=(\mathrm{N}(\pi)-1)/4$. Then  
\[
P_\pi(x) = x^d + a_1x^{d-1} + \dots + a_d,
\]
such that each $a_j$ is divisible by $\pi$ and $a_d =
i^{-\varepsilon}\pi$. 
\end{enumerate}
\end{theorem}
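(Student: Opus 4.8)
The plan is to exploit that $z \mapsto \vf(\beta z)$ is again an elliptic function for $L$ and to read off its shape from the two symmetries of $\vf$: oddness and complex multiplication by $i$. First I would observe that since $\beta\OO \subseteq \OO$ we have $\beta L \subseteq L$, so $\vf(\beta z)$ is $L$-periodic. The zeros of $\vf$ lie at $z \equiv 0,\vp \bmod L$, and \eqref{vfzvfw} shows that $\vf$ has degree $2$ with the two sheets interchanged by the involution $\iota(z) = \vp - z$. Because $\beta$ is odd we have $\beta\vp \equiv \vp \bmod L$, and a one-line computation using \eqref{vfzvfw} then gives $\vf(\beta\,\iota(z)) = \vf(\beta z)$; hence $\vf(\beta z)$ is $\iota$-invariant and therefore a rational function $M_\beta$ of $\vf(z)$. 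The relation $\vf(iz) = i\vf(z)$ forces $M_\beta(iu) = iM_\beta(u)$, so only exponents $\equiv 1 \bmod 4$ survive and $M_\beta(x) = i^\varepsilon x\,P_\beta(x^4)/Q_\beta(x^4)$ for coprime $P_\beta,Q_\beta \in K[x]$ with $P_\beta$ monic; that the coefficients in fact lie in $\OO$ I would get from the multiplicativity $M_{\beta\gamma} = M_\beta \circ M_\gamma$, bootstrapping from $M_i(x)=ix$ and the complex addition law \eqref{addlaw}, whose coefficients are in $\Zi$. This establishes (1) and the field of definition.

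For (3) I would compute degrees: multiplication by $\beta$ on $\C/L$ has kernel $\tfrac1\beta L/L \cong \OO/\beta\OO$ of size $\mathrm N(\beta)$, so $\deg(\vf\circ[\beta]) = 2\,\mathrm N(\beta)$, while $\vf$ has degree $2$; hence $\deg M_\beta = \mathrm N(\beta)$. Since $P_\beta,Q_\beta$ are coprime of the same degree $d$ with numerator $xP_\beta(x^4)$, this gives $1 + 4d = \mathrm N(\beta)$, i.e. $d = (\mathrm N(\beta)-1)/4$ (an integer because $\mathrm N(\beta)\equiv 1\bmod 4$ for odd $\beta$). For (4), the zeros of $\vf(\beta z)$ occur exactly where $\beta z \equiv 0$ or $\vp \bmod L$; the first locus is the $\beta$-torsion $\{\alpha\delta\}$ and the second is its $\iota$-image, so the two collapse to a single set of $\vf$-values. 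The key point is that $\vf$ is injective on $\tfrac1\beta L/L$: if $\vf(\alpha\delta)=\vf(\alpha'\delta)$ then by \eqref{vfzvfw} either $\alpha\equiv\alpha'\bmod\beta$ or $(\alpha+\alpha')\delta\equiv\vp\bmod L$, and the latter is impossible since $\vp\notin\tfrac1\beta L$ for odd $\beta$. This yields exactly $\mathrm N(\beta)$ distinct roots $\vf(\alpha\delta)$ of $xP_\beta(x^4)$, and coprimality of $P_\beta,Q_\beta$ follows because the poles of $\vf(\beta z)$ (lying over $\beta z \equiv (1\pm i)\vp/2$) are disjoint from its zeros.

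The reciprocal symmetry (5) and the identification of $\varepsilon$ in (2) I would extract from a half-period translation. Translation by $\tau = (1+i)\vp/2$ interchanges the zeros and poles of $\vf$, and a divisor computation shows $\vf(z)\vf(z+\tau) = c$ for a unit $c \in \OO$. As $\beta$ is odd, $\beta\tau$ is again a pole-translate of $\vf$, so substituting $z \mapsto z+\tau$ into $\vf(\beta z)=M_\beta(\vf(z))$ produces a functional equation $M_\beta(c/u)\,M_\beta(u) = c'$; matching it against the form $i^\varepsilon x\,P_\beta(x^4)/Q_\beta(x^4)$ forces $Q_\beta(x) = x^{d}P_\beta(1/x)$ and $P_\beta$ monic. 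Comparing the normalized leading behavior with the expansion $M_\beta(u) = \beta u + \cdots$ at $u=0$ (from $\vf(z)=z+\cdots$) gives $\beta = i^\varepsilon P_\beta(0)$, and tracking this at the ramified prime $1+i$ yields the congruence $\beta \equiv i^\varepsilon \bmod 2(1+i)$.

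Finally, for the Eisenstein statement (6), which I expect to be the main obstacle, I would pass to a prime $\mathfrak{p} \mid \pi$ and use that $E$ has good reduction at the odd prime $\pi$. The constant term $a_d = P_\pi(0)$ equals $i^{-\varepsilon}\pi$ directly from $\pi = i^\varepsilon P_\pi(0)$ above. For the remaining coefficients I would show $P_\pi(x) \equiv x^{d}\bmod\pi$, equivalently that every nonzero $\pi$-torsion value $\vf(\alpha\delta)$ reduces to $0$ modulo $\mathfrak{p}$: this is the statement that multiplication by $\pi$ reduces to the Frobenius endomorphism $x\mapsto x^{\mathrm N(\pi)}$ on $\widetilde E/\F_{\mathrm N(\pi)}$, a standard consequence of complex multiplication and the theory of the formal group at $\pi$. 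Granting the reduction $\widetilde{M_\pi}(x) = i^\varepsilon x^{\mathrm N(\pi)}$, the numerator $xP_\pi(x^4)$ becomes $x^{\mathrm N(\pi)}\bmod\pi$, so each $a_j$ with $j<d$ is divisible by $\pi$, completing (6). The delicate points throughout are the integrality of the coefficients and this Frobenius congruence; everything else is the elliptic-function bookkeeping sketched above.
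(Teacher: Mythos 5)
Your proposal takes a genuinely different route from the paper, which proves only part (4) in-house and simply cites \cite[Thms.\ 15.4.4 and 15.4.8]{galois} for parts (1), (2), (3), (5) and (6). For part (4) your argument is essentially the paper's: both rest on \eqref{vfzvfw} to show $\vf(\alpha\db)=\vf(\alpha'\db)$ forces $\alpha\equiv\alpha'\bmod\beta$. For the remaining parts your self-contained sketch --- $\iota$-invariance of $\vf(\beta z)$ under $\iota(z)=\vp-z$, the degree count $\deg M_\beta = \mathrm{N}(\beta)$, and the half-period identity $\vf(z)\vf(z+\tfrac{(1+i)\vp}{2})=\mathrm{const}$ --- is sound in outline and is an attractive alternative to the addition-law induction of the cited reference; in particular the half-period functional equation really does force $Q_\beta(x)=x^dP_\beta(1/x)$ once one normalizes $Q_\beta(0)=1$, and it shows the numerator's leading coefficient is a unit (its square is $\pm1$).

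Two points need more than bookkeeping. First, your derivation of part (2) is circular as written: from $\beta=i^\varepsilon P_\beta(0)$ the congruence $\beta\equiv i^\varepsilon\bmod 2(1+i)$ is \emph{equivalent} to $P_\beta(0)\equiv 1\bmod 2(1+i)$, which is exactly what ``tracking at the ramified prime $1+i$'' would have to prove and which you never establish. What the half-period symmetry actually yields is $\lambda^2=(-1)^{\varepsilon_2}$ with $i^{\varepsilon_2}\equiv\beta\bmod 2$, i.e.\ the congruence only modulo $2$; pinning down the sign of $\lambda=i^\varepsilon$ (equivalently, refining to modulus $2(1+i)$) requires a separate input, such as positivity of $\vf$ on $(0,\vp)$ at a well-chosen real point or the inductive addition-law computation of \cite[Thm.\ 15.4.4]{galois}. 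This matters: the exact $\varepsilon$ is used in part (6), in Proposition~\ref{constantterm}, and in Theorem~\ref{CFTthm}. Second, your proof of (6) via ``reduction of $[\pi]$ is Frobenius'' is correct but imports Deuring's reduction theorem and the formal group at $\pi$, i.e.\ the main theorem of complex multiplication. The paper's Remark following Corollary~\ref{maingalois} points out that the Frobenius congruence \eqref{artin} is \emph{deduced from} \eqref{candoit}, i.e.\ from part (6) proved by Eisenstein's elementary (and, as the paper says, intricate) argument; you run that implication backwards. This is logically fine for proving the theorem itself, but it would quietly make the ``elementary'' proof of irreducibility in Theorem~\ref{main} depend on class-field-theoretic machinery, defeating the stated purpose of the second proof of \eqref{mainison}.
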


\begin{proof}
Parts (1), (2), (3) and (5) are proved in \cite[Thm.\ 15.4.4]{galois},
and part (6) (due to Eisenstein) is proved in
\cite[Thm.\ 15.4.8]{galois}.  As for part (4), let $\db =
\frac{(1+i)\vp}{\beta}$.  For any $\alpha \in \OO$, $\vf(\alpha \db)$
is a root of $xP_\beta(x^4)$ since $\vf$ vanishes at $\beta\alpha\db
\in L$.  Now suppose that $\alpha, \alpha' \in \OO$ satisfy
$\vf(\alpha\db) = \vf(\alpha'\db)$.  By \eqref{vfzvfw}, we have
\[
\alpha\db = (-1)^{m+n}\alpha'\db + (m+in)\vp,\quad m,n \in \Z.
\]
This implies $\alpha(1+i) = (-1)^{m+n}\alpha'(1+i) + (m+in)\beta$, so
that $m+in$ is even.  It follows easily that $\alpha \equiv \alpha'
\bmod \beta$.  Thus $xP_\beta(x^4)$ has at least $\mathrm{N}(\beta) =
|\OO/\beta\OO|$ distinct roots, namely $\vf(\alpha \db)$ for $[\alpha]
\in \OO/\beta\OO$.  Since this polynomial has degree $\mathrm{N}(\beta)$, these
are all of its roots.  Then the same holds for any other
$\beta$-torsion generator $\delta$ since $\delta \equiv \alpha \db
\bmod L$ for some $[\alpha] \in (\OO/\beta\OO)^\times$.
\end{proof}

\begin{remark}
There are formulas for $\vf(\beta z)$ when $\beta$ is even, but they
involve both $\vf(z)$ and $\vf'(z)$.  An example is the duplication
law
\[
\vf(2z) = \frac{2\vf(z)\vf'(z)}{1+\vf(z)^4}.
\]
We also note that differentiating the formula for $\vf(\beta z)$ from
Theorem~\ref{phiPropsThm} leads to a formula for $\vf'(\beta z)$ as a
rational function of $\vf'(z)$.
\end{remark}

We conclude this section with a few words about the elliptic curve $E
= \C/L$ associated to the period lattice $L = \Z(1+i)\vp+\Z(1-i)\vp$
from \eqref{Ldef}.

\begin{lemma}
\label{Eweq}
The Weierstrass equation of $E$ is $Y^2 = 4X^3 + X$.
\end{lemma}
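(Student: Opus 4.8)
The plan is to compare two uniformizations of $E=\C/L$. On one hand, $(\wp,\wp')$ maps $\C/L$ to the Weierstrass curve $\wp'^2 = 4\wp^3 - g_2\wp - g_3$, with $\wp(z)=z^{-2}+O(z^2)$ near $0$ and invariant differential $dz = d\wp/\wp'$. On the other hand, \eqref{diffeq} says $\vf'(z)^2 = 1-\vf(z)^4$, so $z\mapsto(\vf(z),\vf'(z))$ maps $\C/L$ to the quartic curve $C\colon y^2 = 1-x^4$, and here too $dz = dx/y$ since $x=\vf$, $y=\vf'=dx/dz$. My proof reduces the quartic $C$ to Weierstrass form by an explicit change of variables and checks that this reduction respects the differential $dz$, which pins down the equation exactly rather than merely up to isomorphism.

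Concretely, I would reduce $y^2=1-x^4$ using the point $x=0,\,y=1$, setting
\[
X = \frac{1+y}{2x^2},\qquad Y = -\,\frac{1+y}{x^3}.
\]
A direct substitution (using $y^2 = 1-x^4$, so that $(1+y)^2 + x^4 = 2(1+y)$) then verifies $Y^2 = 4X^3 + X$. Next I would check that this substitution respects the differentials: differentiating $y^2 = 1-x^4$ gives $dy/dx = -2x^3/y$, and a short computation yields $dX/dx = -(1+y)/(x^3y)$, hence $Y = (dX/dx)\,y = dX/dz$ along the parametrization $x=\vf(z)$, $y=\vf'(z)$. Thus $X = X(z)$ and $Y=Y(z)=X'(z)$ are elliptic functions for $L$ satisfying $Y^2 = 4X^3+X$.

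It remains to identify $X$ with $\wp$. Since $\vf$ is odd, $X(z)=\frac{1+\vf'(z)}{2\vf(z)^2}$ is even; at the poles of $\vf$ the numerator and $\vf^2$ both blow up and cancel (giving the finite value $-\tfrac12$), so the only candidate poles are the zeros $z\equiv 0,\vp$ of $\vf$. Near $0$ one has $X = z^{-2}+O(1)$, a double pole with leading coefficient $1$, while at $z\equiv\vp$ the relation $\vf'(\vp)=-1$ (from \eqref{diffeq} and the arc-length normalization, with $\vf$ decreasing on $[\vp/2,\vp]$) makes $1+\vf'$ vanish to order $2$, cancelling the pole; so $X$ is holomorphic there. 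An even elliptic function with a single double pole of leading coefficient $1$ must equal $\wp + c$ for a constant $c$, whence $Y = X' = \wp'$. Substituting into $Y^2 = 4X^3+X$ and matching against $\wp'^2 = 4\wp^3 - g_2\wp - g_3$ forces the $\wp^2$-coefficient $12c$ to vanish, so $c=0$, and then $-g_2=1$, $-g_3=0$. Therefore $g_2=-1$, $g_3=0$, and $(X,Y)=(\wp,\wp')$ gives the Weierstrass equation $Y^2 = 4X^3+X$.

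The two algebraic verifications are routine. The step I expect to require the most care is the normalization argument in the last paragraph: one must confirm that the apparent pole of $X$ at $z\equiv\vp$ genuinely cancels and that $X$ has leading coefficient exactly $1$, since it is this matching with the invariant differential (equivalently, the arc-length normalization $\vf'(0)=1$) that yields the specific equation rather than a twist or rescaling of it. As a consistency check, $g_3=0$ also follows abstractly from $iL=L$: writing $G_{2k}=\sum_{0\ne\omega\in L}\omega^{-2k}$, the substitution $\omega\mapsto i\omega$ gives $G_6 = i^{-6}G_6 = -G_6$.
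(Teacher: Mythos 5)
Your proof is correct, but it takes a genuinely different route from the paper's. The paper argues in two quick steps: $g_3(L)=0$ because $iL=L$ (your closing consistency check is exactly this argument), and $g_2(L)=-1$ by citing Rosen's computation $g_2(L')=\frac14$ for the lattice $L'=\Z 2\vp+\Z 2\vp i=(1+i)L$ together with the homogeneity $g_2(\lambda L)=\lambda^{-4}g_2(L)$. You instead derive the equation intrinsically from the lemniscatic differential equation $\vf'^2=1-\vf^4$, reducing the quartic $y^2=1-x^4$ to Weierstrass form and pinning down the normalization by matching the invariant differential and the principal part $z^{-2}$. Your route is longer but self-contained (no appeal to Rosen's value of $g_2$), and as a byproduct it essentially proves the birational relation \eqref{vfwp}, which the paper states separately without proof; indeed your substitution $X=\frac12(1+y)/x^{2}$, $Y=-(1+y)/x^{3}$ is exactly the inverse transformation recorded after Lemma~\ref{Eweq}.

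Two parenthetical numerical claims are off, though neither affects the argument, since in each case all you need is that $X$ is finite. At a pole of $\vf$ the residue $c$ of $\vf$ satisfies $c^2=-1$ (expand $\vf'^2=1-\vf^4$ in the local parameter), so $X=\frac{1+\vf'}{2\vf^2}$ tends to $-\frac{1}{2c}=\pm\frac{i}{2}$, not $-\frac12$; reassuringly, $\pm\frac{i}{2}$ and $0$ are precisely the roots of $4X^3+X$, i.e., the values of $\wp$ at the three $2$-torsion points. And at $z\equiv\vp$, since $\vf(\vp+w)=-\vf(w)$ one gets $1+\vf'(\vp+w)=1-\vf'(w)=\frac{w^4}{2}+\cdots$, so $1+\vf'$ vanishes to order $4$, not $2$; hence $X$ has a double zero at $\vp$ rather than a nonzero finite value, consistent with $\wp(\vp)=0$.
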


\begin{proof}
The Weierstrass equation of $E = \C/L$ is $Y^2 = 4X^3 -g_2(L)X
-g_3(L)$, and $g_3(L) = 0$ since $E$ has complex multiplication by
$\OO=\Zi$.  As for $g_2(L)$, we know from \cite{Rosen} that the
lattice $L' = \Z 2\vp+\Z 2\vp i$ has $g_2(L') = \frac14$.  Since $L' =
(1+i)L$, we obtain
\[
{\textstyle\frac14} = g_2(L') = g_2((1+i)L) = (1+i)^{-4}g_2(L) =
-{\textstyle\frac14}g_2(L). 
\]
Hence $g_2(L) = -1$, which gives the desired Weierstrass
equation. \end{proof} 

Let $\wp(z) = \wp(z;L)$ be the Weierstrass $\wp$-function for $L$.
The elliptic functions $\vf$ and $\vf'$ have period lattice $L$ and
hence are rational functions of $\wp$ and $\wp'$.  By analyzing the
behavior at zeros and poles, one can show that
\begin{equation}
\label{vfwp}
\vf(z) = -2\frac{\wp(z)}{\wp'(z)},\quad \vf'(z) =
\frac{4\wp(z)^2-1}{4\wp(z)^2+1}. 
\end{equation}
To see what this means geometrically, note that by \eqref{diffeq},
$\vf$ and $\vf'$ parametrize the curve 
\begin{equation}
\label{vfeq}
y^2 = 1-x^4,
\end{equation}
while $\wp$ and $\wp'$ parametrize the curve of Lemma~\ref{Eweq}.
Then \eqref{vfwp} tells us that the curves of Lemma~\ref{Eweq} and
\eqref{vfeq} are related by the birational transformation 
\[
x = -2\frac{X}{Y},\quad y = \frac{4X^2-1}{4X^2+1}
\] 
with inverse $X = \frac12 (1+y)/x^{2}$, $Y = -(1+y)/x^{3}$.

\section{Preliminary Analysis of the Galois Group}
\label{PrelimSec}

We begin with the field $K_\beta = K(\vf(\frac{2\vp}\beta))$ defined
in the Introduction.   

\begin{proposition}
\label{fieldKbeta}
If $\beta \in \OO$ is odd and $\delta$ is any $\beta$-torsion
generator, then 
\[
K_\beta = K(\vf(\delta)) = K(\vf(\delta),\vf'(\delta)) =
K(\wp(\delta),\wp'(\delta)) = K(E[\beta]), 
\]
where $K(E[\beta])$ is the field obtained from $K$ by adjoining the
coordinates of the $\beta$-torsion points of $E$.
\end{proposition}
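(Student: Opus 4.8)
The plan is to show that all four fields coincide with $K(E[\beta])$, which is visibly independent of the choice of generator; this yields every equality in the displayed chain at once, together with the fact that $K(\vf(\delta))$ does not depend on $\delta$. First I would record that $\tfrac{2\vp}\beta = (1-i)\,\db$ is itself a $\beta$-torsion generator: since $L=(1+i)\vp\,\OO$, the element $\db$ corresponds to $1\bmod\beta$ under $\tfrac1\beta L/L\simeq\OO/\beta\OO$, and $1-i$ is a unit modulo the odd modulus $\beta$. Hence it suffices to prove the chain for an arbitrary generator $\delta$ and then specialize to $\delta=\tfrac{2\vp}\beta$ to identify $K_\beta$; if $\beta$ is a unit every field equals $K$, so I assume $\beta$ is a non-unit. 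A preliminary observation, on which everything rests, is that because $\beta$ is odd, $\delta\in\tfrac1\beta L$ cannot be congruent mod $L$ to $0$, to $\vp$, to a pole $(1\pm i)\vp/2$, or to a nonzero $2$-torsion point, since each congruence would force $\vp$, a pole, or a half-period to lie in $\tfrac1\beta L$, i.e.\ would force $2\mid\beta$. Consequently $\vf(\delta)\neq 0$, $\vf(\delta)$ and $\vf'(\delta)$ are finite, and $\wp'(\delta)\neq 0$.

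The two geometric equalities are then immediate. For $K(\vf(\delta),\vf'(\delta))=K(\wp(\delta),\wp'(\delta))$ I would substitute $(\vf(\delta),\vf'(\delta))$ and $(\wp(\delta),\wp'(\delta))$ into the birational transformation following \eqref{vfwp} and into its stated inverse: both maps have coefficients in $K$, and the nonvanishing above makes every denominator nonzero, giving mutual containment. For $K(\wp(\delta),\wp'(\delta))=K(E[\beta])$, the inclusion $\subseteq$ is clear since $(\wp(\delta),\wp'(\delta))$ is the point $\delta\in E[\beta]$. For $\supseteq$, every $\beta$-torsion point equals $[\alpha]\delta$ for some $\alpha\in\OO$, because $\delta$ generates $E[\beta]$ as an $\OO$-module; multiplication by $\alpha=m+ni$ is assembled from $[m],[n]$ and the complex-multiplication endomorphism $[i]$, which on $Y^2=4X^3+X$ is $(X,Y)\mapsto(-X,iY)$. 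This map is defined over $K=\Q(i)$ — precisely where $i\in K$ is used — so the coordinates of every $[\alpha]\delta$ lie in $K(\wp(\delta),\wp'(\delta))$.

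The remaining inclusion is $\vf'(\delta)\in K(\vf(\delta))$, giving $K(\vf(\delta))=K(\vf(\delta),\vf'(\delta))$. Here I would differentiate the complex-multiplication identity of Theorem~\ref{phiPropsThm}, namely $\vf(\beta z)=M_\beta(\vf(z))$, to obtain $\beta\,\vf'(\beta z)=M_\beta'(\vf(z))\,\vf'(z)$, and evaluate at $z=\delta$. Since $\beta\delta\in L$ we have $\vf(\beta\delta)=0$ and $\vf'(\beta\delta)=\vf'(0)=1$, while $\delta$ being neither a zero nor a pole of $\vf$ makes the substitution legitimate and forces $Q_\beta(\vf(\delta)^4)\neq 0$. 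The identity then reads $1=\tfrac1\beta\,M_\beta'(\vf(\delta))\,\vf'(\delta)$, which simultaneously shows $M_\beta'(\vf(\delta))\neq 0$ and exhibits $\vf'(\delta)=\beta/M_\beta'(\vf(\delta))\in K(\vf(\delta))$, since $M_\beta\in K(x)$.

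Assembling the three steps gives $K(\vf(\delta))=K(\vf(\delta),\vf'(\delta))=K(\wp(\delta),\wp'(\delta))=K(E[\beta])$ for every $\beta$-torsion generator $\delta$, and specializing to $\delta=\tfrac{2\vp}\beta$ identifies this common field with $K_\beta$. The main obstacle is not any single inclusion but the bookkeeping behind the nonvanishing hypotheses: everything depends on $\beta$ being odd to keep $\delta$ clear of the zeros, poles, and $2$-torsion, and it is exactly this that legitimizes both the birational substitution and the differentiation of the complex-multiplication formula.
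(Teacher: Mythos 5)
Your proof is correct, but two of the four equalities are handled by arguments genuinely different from the paper's. For the inclusion $\vf'(\delta)\in K(\vf(\delta))$ (the second equality), the paper computes $\vf(\frac{2\vp}{\beta})=\vf((1-i)\db)=\frac{(1-i)\vf(\db)\vf'(\db)}{1+\vf(\db)^4}$ from the addition law, solves for $\vf'(\db)$, and then propagates to general $\delta$ via the addition laws; you instead differentiate $\vf(\beta z)=M_\beta(\vf(z))$ and evaluate at $z=\delta$, using $\vf'(\beta\delta)=\vf'(0)=1$ to get $\vf'(\delta)=\beta/M_\beta'(\vf(\delta))$. Your version is arguably cleaner: it works uniformly for every $\beta$-torsion generator at once, and the identity $\beta=M_\beta'(\vf(\delta))\vf'(\delta)$ delivers the needed nonvanishing of $M_\beta'(\vf(\delta))$ for free, whereas the paper's route needs the first equality already in hand to conclude $\vf'(\db)\in K(\vf(\db))$. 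For the independence of $K(\vf(\delta))$ from $\delta$ (the first equality), the paper argues directly via $\vf(\delta)=\vf(\alpha\delta')=M_\alpha(\vf(\delta'))$ for an odd representative $\alpha$, while you obtain it as a byproduct of identifying everything with the visibly $\delta$-independent field $K(E[\beta])$; both are fine, and yours trades a small direct computation for a slightly longer chain. Your treatments of the third and fourth equalities match the paper's in substance but are more explicit about the bookkeeping the paper suppresses (nonvanishing of denominators in the birational transformation, and the fact that multiplication by $\alpha=m+ni$ is defined over $K$ because $[i]\colon(X,Y)\mapsto(-X,iY)$ has coefficients in $K$). One trivial slip: a congruence $\delta\equiv\vp\bmod L$ forces $(1+i)\mid\beta$ rather than $2\mid\beta$, but either conclusion contradicts the oddness of $\beta$, so nothing is lost.
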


\begin{proof}
We first show that $\frac{2\vp}{\beta}$ is a $\beta$-torsion
generator.  Since $\beta$ is odd, we have $u\beta + v(1-i) = 1$ for some
$u,v \in \OO$.  Multiplying this by $\db = \frac{(1+i)\vp}{\beta}$
gives $v\frac{2\vp}{\beta} \equiv \db\bmod L$, and our claim follows. 

To prove the first equality of the proposition, let $\delta,\delta'$
be $\beta$-torsion generators. As noted above, $\delta \equiv
\alpha\delta' \bmod L$ for some $[\alpha] \in (\OO/\beta\OO)^\times$,
where we may assume that $\alpha$ is odd since $\beta$ is (if $\alpha$
is even, replace it with $\alpha + \beta$.) Then $\vf(\delta) =
\vf(\alpha\delta')$ and the latter is in $K(\vf(\delta'))$ by part (1)
of Theorem~\ref{phiPropsThm}. This implies $K(\vf(\delta)) =
K(\vf(\delta'))$, and then the first equality follows using $\delta' =
\frac{2\vp}{\beta}$.

For the second equality, we use \eqref{addlaw} to obtain
\[
\vf({\textstyle\frac{2\vp}{\beta}}) =
\vf\Big({\textstyle\frac{(1-i)(1+i)\vp}{\beta}}\Big) = \vf((1-i)\db) =
\frac{(1-i)\vf(\db)\vf'(\db)}{1+\vf(\db)^4}. 
\]
This implies $\vf'(\db) \in K(\vf(\frac{2\vp}{\beta}),\vf(\db)) =
K(\vf(\db))$, which shows that the second equality holds for $\db$.
Then the addition laws for $\vf$ and $\vf'$ show that it also holds
for any $\beta$-torsion generator $\delta$. 

The third equality follows since \eqref{vfwp} is birational, and the
final equality follows since $(\wp(\delta),\wp'(\delta)) \in E[\beta]$
generates $E[\beta]$ as an $\OO$-module. 
\end{proof}

The final equality of Proposition~\ref{fieldKbeta} shows that
$K_\beta$ is a Galois extension of $K$ (this also follows from Theorem~\ref{phiPropsThm}).  Here is a preliminary result
about the Galois group of $K_\beta/K$. 

\begin{proposition}
\label{preliminary}
Let $\beta \in \OO$ be odd.  Then for any $\sigma \in
\gal(K_\beta/K)$, there is a unique $[\alpha] \in
(\OO/\beta\OO)^\times$ such that $\sigma(\vf(\delta)) =
\vf(\alpha\delta)$ for any $\beta$-torsion generator $\delta$.
Furthermore, the map $\sigma \mapsto [\alpha]$ defines an injective
homomorphism 
\[
\gal(K_\beta/K) \hookrightarrow (\OO/\beta\OO)^\times.
\]
\end{proposition}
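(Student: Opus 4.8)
The plan is to fix once and for all the distinguished $\beta$-torsion generator $\db = \frac{(1+i)\vp}{\beta}$ and use it to define the class $[\alpha]$, then bootstrap to arbitrary generators. Since $\vf(\db)$ is a root of the $\beta$-division polynomial $xP_\beta(x^4)$, which by Theorem~\ref{phiPropsThm} lies in $\OO[x]\subseteq K[x]$, any $\sigma\in\gal(K_\beta/K)$ carries $\vf(\db)$ to another root of this polynomial. By part~(4) of Theorem~\ref{phiPropsThm} the roots are exactly the $\mathrm{N}(\beta)$ \emph{distinct} values $\vf(\alpha\db)$ for $[\alpha]\in\OO/\beta\OO$, so there is one and only one class $[\alpha]\in\OO/\beta\OO$ with $\sigma(\vf(\db)) = \vf(\alpha\db)$. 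Distinctness of the roots thus delivers both existence and uniqueness of $[\alpha]$ simultaneously.

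Next I would show that this same $[\alpha]$ computes $\sigma$ on \emph{every} $\beta$-torsion generator. The engine is the complex multiplication identity $\vf(\gamma z) = M_\gamma(\vf(z))$ of Theorem~\ref{phiPropsThm}(1), valid for odd $\gamma$, where $M_\gamma$ is a rational function with coefficients in $K$; since $\sigma$ fixes $K$ it commutes with $M_\gamma$. Given any torsion generator $\delta$, write $\delta\equiv\gamma\db\bmod L$ with $[\gamma]\in(\OO/\beta\OO)^\times$ and (adjusting by $\beta$ as in the proof of Proposition~\ref{fieldKbeta}) $\gamma$ odd. Then $\vf(\delta) = M_\gamma(\vf(\db))$ and
\[
\sigma(\vf(\delta)) = M_\gamma(\sigma(\vf(\db))) = M_\gamma(\vf(\alpha\db)) = \vf(\gamma\alpha\db) = \vf(\alpha\delta),
\]
using commutativity of $\OO$ and the $L$-periodicity of $\vf$. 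So the desired formula holds for all $\delta$ with a single $\alpha$.

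The crux is to upgrade $[\alpha]\in\OO/\beta\OO$ to a \emph{unit}, and this is where I expect the only real subtlety. I would apply the previous reasoning to $\sigma^{-1}$ to get an odd $\alpha'$ with $\sigma^{-1}(\vf(\db)) = \vf(\alpha'\db)$, then apply $\sigma$ and again commute it past $M_{\alpha'}$:
\[
\vf(\db) = \sigma(\vf(\alpha'\db)) = M_{\alpha'}(\sigma(\vf(\db))) = M_{\alpha'}(\vf(\alpha\db)) = \vf(\alpha'\alpha\db).
\]
Distinctness of roots (part~(4) once more) forces $\alpha'\alpha\equiv 1\bmod\beta$, so $[\alpha]\in(\OO/\beta\OO)^\times$. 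This manufacturing of the inverse from $\sigma^{-1}$ is exactly the point at which one must use that $\sigma$ is an automorphism rather than a mere embedding; without it one can only place $[\alpha]$ in the monoid $\OO/\beta\OO$.

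Finally, the homomorphism and injectivity assertions are routine. For $\sigma,\tau$ with classes $[\alpha_\sigma],[\alpha_\tau]$, the same computation gives $(\sigma\tau)(\vf(\db)) = \sigma(\vf(\alpha_\tau\db)) = M_{\alpha_\tau}(\vf(\alpha_\sigma\db)) = \vf(\alpha_\tau\alpha_\sigma\db)$, so $[\alpha_{\sigma\tau}] = [\alpha_\sigma][\alpha_\tau]$ because $(\OO/\beta\OO)^\times$ is abelian; hence $\sigma\mapsto[\alpha]$ is a homomorphism. Injectivity is immediate: if $[\alpha] = [1]$ then $\sigma$ fixes $\vf(\db)$, which generates $K_\beta = K(\vf(\db))$ over $K$ by Proposition~\ref{fieldKbeta}, forcing $\sigma = \mathrm{id}$.
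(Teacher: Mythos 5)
Your proof is correct. The paper itself omits the argument, deferring to \cite[Thm.\ 15.5.1]{galois}, and what you have written is precisely the standard argument that reference supplies: $\sigma$ permutes the roots $\vf(\alpha\db)$ of the division polynomial $xP_\beta(x^4)$, the complex multiplication formulas $M_\gamma$ with coefficients in $\OO$ let you propagate the single class $[\alpha]$ to every torsion generator and compose automorphisms, and applying the construction to $\sigma^{-1}$ produces the inverse that places $[\alpha]$ in $(\OO/\beta\OO)^\times$. The only point you pass over silently is that $M_\gamma$ is actually defined at the values $\vf(\alpha\db)$ (i.e., $Q_\gamma(\vf(\alpha\db)^4)\neq 0$), which holds because all the values $\vf(\mu\db)$ are finite while the numerator and denominator of $M_\gamma$ share no roots; this is harmless but worth a sentence.
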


\begin{proof} A similar result is proved in
\cite[Thm.\ 15.5.1]{galois}.  The proposition here follows by
essentially the same argument.  We omit the details.
\end{proof}

Our eventual goal is to prove that the map of
Proposition~\ref{preliminary} is an isomorphism.  

\section{Class Field Theory}
\label{CFTSec}

Using class field theory, we provide the first proof of our main
result. 

\begin{theorem}
\label{CFTthm}
Let $\beta\in \OO$ be odd. Then $K_\beta$ is the ray class field of
$K$ for the modulus $2(1+i)\beta$.  Furthermore, the map of
Proposition~{\rm\ref{preliminary}} is an isomorphism 
\[
\gal(K_\beta/K) \simeq (\OO/\beta \OO)^\times.
\]
\end{theorem}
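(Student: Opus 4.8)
The plan is to deduce the theorem from the main theorem of complex multiplication applied to the CM elliptic curve $E\colon Y^2 = 4X^3+X$ of Lemma~\ref{Eweq}. By Proposition~\ref{fieldKbeta} we already know that $K_\beta = K(E[\beta])$ is the full $\beta$-torsion field of $E$, and by Proposition~\ref{preliminary} it is abelian over $K$ with an injection $\gal(K_\beta/K)\hookrightarrow(\OO/\beta\OO)^\times$. Two things then remain: to show that this abelian extension is the one cut out by the modulus $2(1+i)\beta$, and to upgrade the injection to an isomorphism. By class field theory the latter is equivalent to surjectivity of the associated Artin map, so the whole theorem follows once I can compute Artin symbols explicitly and match them against Proposition~\ref{preliminary}.

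First I would record the purely arithmetic fact that the ray class group for $2(1+i)\beta$ is isomorphic to $(\OO/\beta\OO)^\times$, which both explains the appearance of the modulus and identifies the target group. Since $h_K=1$ and $\OO^\times=\{\pm1,\pm i\}$, one has $Cl_{\fm}\simeq(\OO/\fm)^\times/\mathrm{im}(\OO^\times)$ for any modulus $\fm$ with no infinite part. Writing $2(1+i)=-i(1+i)^3$ and using that $\beta$ is odd, the Chinese Remainder Theorem gives $(\OO/2(1+i)\beta)^\times\simeq(\OO/(1+i)^3)^\times\times(\OO/\beta\OO)^\times$. The key point, the same one flagged before the theorem that every odd element is $\equiv i^\varepsilon\bmod 2(1+i)$, is that reduction modulo $2(1+i)$ carries $\OO^\times$ isomorphically onto $(\OO/(1+i)^3)^\times$. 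Quotienting by the units therefore kills the first factor, and the rule sending an ideal $\fa=(\alpha)$ with $\alpha$ \emph{primary} (that is, $\alpha\equiv1\bmod 2(1+i)$) to $[\alpha]\in(\OO/\beta\OO)^\times$ is a well-defined isomorphism $Cl_{2(1+i)\beta}\xrightarrow{\sim}(\OO/\beta\OO)^\times$.

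Next comes the heart of the argument, the reciprocity law of complex multiplication. For an ideal $\fa$ coprime to $2(1+i)\beta$ I would compute the Artin symbol $\sigma_\fa$ of $\fa$ and show that it acts on the torsion of $E$ by multiplication by a generator of $\fa$; the scaling law $\vf(iz)=i\vf(z)$ built into $\vf$ then forces this generator to be the primary one, so that $\sigma_\fa(\vf(\delta))=\vf(\alpha\delta)$ with $\alpha$ primary. Under Proposition~\ref{preliminary} this says $\sigma_\fa\mapsto[\alpha]\in(\OO/\beta\OO)^\times$, i.e. the composite $Cl_{2(1+i)\beta}\to\gal(K_\beta/K)\hookrightarrow(\OO/\beta\OO)^\times$ is exactly the isomorphism of the previous paragraph. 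In particular the Artin map factors through $Cl_{2(1+i)\beta}$, so $K_\beta$ is contained in the ray class field for the modulus $2(1+i)\beta$.

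To finish, observe that the composite just described is onto, hence so is the injection of Proposition~\ref{preliminary}; being injective as well, it is the desired isomorphism $\gal(K_\beta/K)\simeq(\OO/\beta\OO)^\times$. Comparing orders, $[K_\beta:K]=|(\OO/\beta\OO)^\times|=|Cl_{2(1+i)\beta}|$, the last being the degree over $K$ of the ray class field for $2(1+i)\beta$; so the containment above is an equality and $K_\beta$ is precisely that ray class field. I expect the main obstacle to be the reciprocity computation: one must invoke the main theorem of complex multiplication in a form that tracks the \emph{exact} generator and not merely the ideal class, check that the bad reduction of $E$ at $1+i$ contributes exactly the factor $2(1+i)$ to the conductor and no more, and reconcile the automorphism $[i]$ of $E$ with the scaling $\vf(iz)=i\vf(z)$. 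It is precisely this last bookkeeping that produces the primary normalization, and hence the modulus $2(1+i)\beta$ rather than $\beta$.
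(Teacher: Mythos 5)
Your route is genuinely different from the paper's. The paper never computes Artin symbols on torsion points: it invokes the theorem that adjoining the Weber function values $h(E[\fm])$, $h = \wp^2/g_2$, gives the ray class field for $\fm$, applies this with $\fm = 2(1+i)\beta$ to obtain a field $L = K(\wp(\delta_{2(1+i)\beta})^2) = K(\vf'(\tfrac{\vp}{2\beta}))$ of degree $|(\OO/\beta\OO)^\times|$ (via the same unit computation you perform), and then proves the single explicit identity $\vf'(z-\tfrac{\vp}{2}) = 2\vf(z)/(1+\vf(z)^2)$ to get $L \subseteq K_\beta$; comparison with the injection of Proposition~\ref{preliminary} forces $L = K_\beta$ and surjectivity simultaneously. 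You instead propose to compute the Artin symbols themselves --- in effect the Gr\"ossencharacter of $E$ --- and deduce surjectivity from the fact that every class in $(\OO/\beta\OO)^\times$ is represented by a primary element. If completed, your argument proves slightly more, since it identifies the Artin map explicitly (something the paper only recovers afterward, in the remark following Corollary~\ref{maingalois}, as a consequence of Eisenstein's congruences).

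The caveat is that the step you defer is the entire difficulty, and the sentence claiming that the scaling law $\vf(iz)=i\vf(z)$ ``forces this generator to be the primary one'' is an assertion, not an argument. The main theorem of CM in its usual form gives that $\sigma_\fa$ acts on torsion through multiplication by \emph{some} generator of $\fa$, i.e., it determines the answer only up to the unit group $\{\pm 1,\pm i\}$; pinning down the unit is precisely the computation of the Gr\"ossencharacter of the specific curve $Y^2=4X^3+X$ at the prime $1+i$, a quartic-twist bookkeeping problem that the paper avoids entirely because the Weber function is twist-invariant. If you want to complete your plan with tools already in the paper, the needed input is the congruence $\vf(\pi\alpha\db)\equiv \vf(\alpha\db)^{\mathrm{N}(\pi)} \bmod \fp$ for primary primes $\pi$, which follows from part (6) of Theorem~\ref{phiPropsThm} exactly as in \eqref{artin}: it identifies $\sigma_\fp$ with multiplication by the primary generator of $\fp$. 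Without that (or an equivalent conductor computation), both the surjectivity and the identification of the modulus remain unproved.
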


\begin{proof}
Let $\beta\in\OO$ be odd. The Weierstrass equation of $E$ has $g_2 =
-1$ and $g_3 = 0$, so that the Weber function of $E$ is given by
$(1/g_2)\wp^2 = -\wp^2$.  Since $\delta_{2(1+i)\beta}$ is a
$2(1+i)\beta$-torsion generator, the theory of complex multiplication
(see \cite[Thm.\ 5.6]{Silverman}, for example) tells us that 
\[
L = K(\wp(\delta_{2(1+i)\beta})^2)
\]
is the ray class field of $K$ for the modulus $2(1+i)\beta$.
Furthermore, since $\OO$ is a PID, the Artin map induces an
isomorphism 
\[
\gal(L/K) \simeq (\OO/2(1+i)\beta\OO)^\times /\OO^\times.
\]
However, since $\beta$ is odd, we have
\begin{align*}
(\OO/2(1+i)\beta\OO)^\times /\OO^\times
&\simeq \big( (\OO/2(1+i)\OO)^\times \times (\OO/\beta\OO)^\times
\big)/\OO^\times\\ 
&\simeq (\OO/\beta\OO)^\times,
\end{align*}
where the last isomorphism follows because $\OO^\times = \{\pm1,\pm
i\}$ maps isomorphically onto $(\OO/2(1+i)\OO)^\times$.  It follows
that 
\[
\gal(L/K) \simeq (\OO/\beta\OO)^\times.
\]

Using the relation \eqref{vfwp} between $\vf'$ and $\wp^2$, we can
write the ray class field $L$ as 
\[
L = K(\vf'(\delta_{2(1+i)\beta})).
\]
We prove that $\vf'(\delta_{2(1+i)\beta}) \in K_\beta$ as follows.
First observe that $\delta_{2(1+i)\beta} = \frac{\vp}{2\beta}$. Now
consider the identity 
\begin{align}
\label{phiIdentity}
\vf'(z-\tfrac{\vp}{2}) = 2\frac{\vf(z)}{1+\vf(z)^2}.
\end{align}
Since $\vf(\frac{\vp}{2}) = 1$ and $\vf'(\frac{\vp}{2}) = 0$,
\eqref{phiIdentity} follows from the addition law for
$\vf(z-\frac{\vp}{2})$ by differentiation. Since $\beta$ is odd, there
exists a $\gamma\in\OO$ such that $2(1+i)\gamma - \beta = i^\ve$ for
$\ve\in\{0,1,2,3\}$.  Then $\gamma\db - \frac{\vp}{2} =
i^\ve\frac{\vp}{2\beta}$. Using $\vf'(iz) = \vf'(z)$ and
\eqref{phiIdentity}, we obtain
\[
\vf'(\tfrac{\vp}{2\beta}) = \vf'(i^\ve\tfrac{\vp}{2\beta}) =
\vf'(\gamma\db - \tfrac{\vp}{2}) =  
2\frac{\vf(\gamma\db)}{1+\vf(\gamma\db)^2}\in K(\vf(\db)) = K_\beta. 
\]
It follows that $K \subseteq L \subseteq K_\beta$.  Since these are
Galois extensions, we have 
\[
|\gal(K_\beta/K)| \ge |\gal(L/K)| = |(\OO/\beta\OO)^\times|.
\]
Combining this with the injection of Proposition~\ref{preliminary}, we
conclude that $L = K_\beta$ and that the injection of
Proposition~\ref{preliminary} is an isomorphism.  
\end{proof}

Setting $\beta = n$ in Theorem~\ref{CFTthm} gives the following
corollary.  

\begin{corollary}
If $n$ is a positive odd integer, then $K_n = K(\vf(\frac{2\vp}{n}))$
is a Galois extension of $K$ with Galois group  
\[
\gal(K_n/K) \simeq (\OO/n\OO)^\times.
\]
Furthermore, $K_n$ is the ray class field of $K$ for the modulus
$2(1+i)n$. 
\end{corollary}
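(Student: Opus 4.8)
The plan is to obtain this corollary as the special case $\beta = n$ of Theorem~\ref{CFTthm}, so the main task is simply to verify that its hypotheses apply and that the notation matches. First I would check that a positive odd integer $n$ is \emph{odd} as an element of $\OO = \Zi$. Writing $n = n + i\cdot 0$, the criterion from Section~\ref{LemFnSec} that $m+in$ is odd exactly when $m+n$ is odd reads $n + 0 = n$, which is odd by hypothesis; equivalently, $\mathrm{N}(1+i) = 2$ is even while $\mathrm{N}(n) = n^2$ is odd, so $n$ is relatively prime to $1+i$.

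Next I would confirm that $K_n$ as defined in the Introduction really is the field $K_\beta$ to which Theorem~\ref{CFTthm} applies. By the argument at the start of the proof of Proposition~\ref{fieldKbeta}, $\frac{2\vp}{\beta}$ is a $\beta$-torsion generator whenever $\beta$ is odd; specializing to $\beta = n$ shows that $\frac{2\vp}{n}$ is an $n$-torsion generator, so $K_n = K(\vf(\frac{2\vp}{n}))$ is precisely the field $K_\beta$ with $\beta = n$.

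With these two observations in place, the corollary follows directly. Theorem~\ref{CFTthm} gives that $K_n$ is the ray class field of $K$ for the modulus $2(1+i)n$ and that the injection of Proposition~\ref{preliminary} becomes an isomorphism $\gal(K_n/K) \simeq (\OO/n\OO)^\times$. The assertion that $K_n/K$ is Galois follows from either the ray class field description or the identification $K_n = K(E[n])$ in Proposition~\ref{fieldKbeta}. Because the result is a pure specialization, there is no genuine obstacle; the only point demanding a moment's care is confirming that the rational integer $n$ is odd in $\OO$, which the norm computation settles immediately.
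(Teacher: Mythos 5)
Your proposal is correct and matches the paper exactly: the paper derives this corollary simply by setting $\beta = n$ in Theorem~\ref{CFTthm}, and your additional checks (that a positive odd integer $n$ is odd in $\OO$, and that $K_n$ is the field $K_\beta$ for $\beta = n$) are precisely the routine verifications implicit in that specialization.
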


This is the class field theory proof of the isomorphism
\eqref{mainison} from the Introduction.  In 1980, Rosen \cite{Rosen}
applied class field theory to the lemniscate.  He used the lattice $L'
= \Z 2\vp+\Z 2\vp i$, which corresponds to the elliptic curve $E' =
\C/L'$ defined by $Y^2 = 4X^3 - \frac14 X$.  In order to prove Abel's
theorem on the lemniscate (see Theorem~\ref{abel} below), he used the
isomorphism
\[
\gal(K(\wp({\textstyle\frac{2\vp}{n}};L')^2)/K) \simeq
(\OO/n\OO)^\times/\OO^\times 
\]
since $K(\wp(\frac{2\vp}{n};L')^2)$ is the ray class field of $K$ for
the modulus $n$.  In Rosen's approach, $n$ can be any positive
integer.  The first person to apply class field theory to the
lemniscate was Takagi in his 1903 thesis, where he showed that all
abelian extensions of $K = \Q(i)$ are generated by division values of
the lemniscatic elliptic function (see Schappacher \cite[p.\ 259]{Sch1}
for precise references).  

Theorem~\ref{CFTthm} for the case when $\beta$ is an odd prime power
was first proved by Lemmermeyer in \cite[Thm.\ 8.19]{Lemmermeyer}.

\section{Lemnatomic Polynomials}
\label{LemPolySec}

In this section we present the second proof of our main
result. Pursuing the analogy between the roots of unity and the
division points of the lemniscate leads to an algebraic theory for the
lemniscate akin to the circle's cyclotomy. We begin by developing the
foundations of the theory, introducing the \emph{lemnatomic
polynomials}, and finally proving their irreducibility over $K$. Our
main result \eqref{mainison} follows as a corollary.

Here is the key definition of this section.

\begin{definition}
\label{lempolydef}
Let $\beta\in \OO$ be odd and set $\db = \frac{(1+i)\vp}{\beta}$.  We
call 
\[
\LL_\beta(x)\  = \!\!\!\!\prod_{[\alpha]\in (\OO/\beta 
\OO)^\times} \!\!\!\! {(x-\vf(\alpha\db))}
\]
the \emph{\bfseries \boldmath{$\beta^{th}\!$} lemnatomic polynomial}.
\end{definition}

In this definition, we can replace $\db$ with any $\beta$-torsion
generator $\delta$.  Furthermore, the roots of $\LL_\beta$ are
$\vf(\delta)$ as $\delta$ ranges over all lattice-inequivalent
$\beta$-torsion generators.

Note also that $\LL_{\beta} = \LL_{\beta'}$ when $\beta$ and $\beta'$
are associates in $\OO$ since $\LL_\beta$ depends only on the ideal
generated by $\beta$. It will often be convenient to specify an
associate.  Recall that an odd $\beta \in \OO$ satisfies $\beta \equiv
i^\ve \bmod 2(1+i)$ for $\ve \in \{0,1,2,3\}$.

\begin{definition}
An odd element $\beta\in\OO$  is
\emph{\bfseries normalized} if $\beta\equiv 1\bmod{2(1+i)}$.
\end{definition}

\begin{lemma}
\label{features}
Let $\LL_\beta$ be as above. Then $\LL_\beta\in \OO[x]$ is monic
of degree $|(\OO/\beta \OO)^\times|$.
\end{lemma}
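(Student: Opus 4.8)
The plan is to read the degree and leading coefficient straight off the defining product, and then to pin down the coefficients in two moves: a Galois-descent argument placing them in $K$, followed by an integrality argument placing them in $\OO$. Since
\[
\LL_\beta(x) = \!\!\!\!\prod_{[\alpha]\in(\OO/\beta\OO)^\times}\!\!\!\!(x-\vf(\alpha\db))
\]
is a product of $|(\OO/\beta\OO)^\times|$ monic linear factors, it is automatically monic of degree $|(\OO/\beta\OO)^\times|$, so no argument is needed for those two assertions. (By Theorem~\ref{phiPropsThm}(4) the roots $\vf(\alpha\db)$ are in fact distinct, so $\LL_\beta$ is even separable, though that is not required here.) The entire content of the lemma is therefore the claim that $\LL_\beta \in \OO[x]$.

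To show the coefficients lie in $K$, I would exploit that $\gal(K_\beta/K)$ permutes the roots. For each $[\alpha]\in(\OO/\beta\OO)^\times$ the element $\alpha\db$ is again a $\beta$-torsion generator, so Proposition~\ref{preliminary} applies to it: for $\sigma\in\gal(K_\beta/K)$ with associated $[\gamma]\in(\OO/\beta\OO)^\times$ we get $\sigma(\vf(\alpha\db)) = \vf(\gamma\alpha\db)$. Because $[\alpha]\mapsto[\gamma\alpha]$ is a bijection of $(\OO/\beta\OO)^\times$, each $\sigma$ merely permutes the roots of $\LL_\beta$ and hence fixes its coefficients. As $K_\beta/K$ is Galois, the coefficients lie in $K$. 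Next, to upgrade ``in $K$'' to ``in $\OO$'', I would use that by Theorem~\ref{phiPropsThm}(4)--(5) every root $\vf(\alpha\db)$ is a root of the monic polynomial $xP_\beta(x^4)\in\OO[x]$, hence integral over $\OO$. The coefficients of $\LL_\beta$ are elementary symmetric functions of these roots, so they too are algebraic integers; being algebraic integers lying in $K=\Q(i)$, they lie in its ring of integers $\OO=\Z[i]$, giving $\LL_\beta\in\OO[x]$.

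The only step needing genuine care is the Galois-descent step, and specifically the observation that $\alpha\db$ is a $\beta$-torsion generator exactly when $[\alpha]$ is a unit modulo $\beta$, since this is what lets Proposition~\ref{preliminary} act on every root simultaneously and thereby forces $\sigma$ to permute the full root set. Everything else --- the degree and monicity from the product form, and the passage from $K$ to $\OO$ via integrality --- is formal.
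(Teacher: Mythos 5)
Your proof is correct and follows essentially the same route as the paper: degree and monicity are read off the defining product, the coefficients lie in $K$ because $\gal(K_\beta/K)$ permutes the roots via Proposition~\ref{preliminary}, and they lie in $\OO$ because the roots are integral as roots of the monic $xP_\beta(x^4)\in\OO[x]$. Your spelling out of why each $\sigma$ permutes the full root set (multiplication by the unit $[\gamma]$ is a bijection of $(\OO/\beta\OO)^\times$) is just a more explicit version of the step the paper leaves implicit.
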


\begin{proof}
All that requires demonstration is $\LL_\beta\in \OO[x]$. By
Proposition \ref{preliminary}, the roots of $\LL_\beta$ are permuted
under the action of $\gal(K_\beta/K)$ and hence the coefficients lie
in the fixed field $K$ of the Galois group.  The roots of $\LL_\beta$
are algebraic integers since they are also roots of the monic
polynomial $xP_\beta(x^4)\in \OO[x]$.  It follows that
$\LL_\beta(x)\in \OO[x]$.
\end{proof}

As cyclotomic polynomials provide a factorization of $x^n-1$ over
$\Q$, the lemnatomic polynomials provide a factorization of
$xP_\beta(x^4)$ over $K$.

\begin{proposition}
\label{decomp}
Let $\beta\in \OO$ be odd and let $xP_\beta(x^4)$ be the
$\beta$-division polynomial from Theorem~{\rm\ref{phiPropsThm}}. Then
\[
xP_\beta(x^4) = \prod_{\gamma \mid \beta}{\LL_\gamma (x)},
\]
where the product is over all normalized divisors $\gamma$ of $\beta$.
\end{proposition}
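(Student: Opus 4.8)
The plan is to prove this factorization by comparing roots. Both sides are polynomials in $\OO[x]$, so it suffices to show they are monic of the same degree and share the same roots with the same multiplicities. By Theorem~\ref{phiPropsThm}(4), the left-hand side $xP_\beta(x^4)$ has exactly $\mathrm{N}(\beta)$ distinct roots, namely $\vf(\alpha\db)$ for $[\alpha] \in \OO/\beta\OO$, where $\db = \frac{(1+i)\vp}{\beta}$ is a fixed $\beta$-torsion generator. Since the roots are distinct, $xP_\beta(x^4)$ is separable of degree $\mathrm{N}(\beta)$. On the right-hand side, each $\LL_\gamma$ is monic of degree $|(\OO/\gamma\OO)^\times|$ by Lemma~\ref{features}, so the product is monic, and matching degrees amounts to the identity $\mathrm{N}(\beta) = \sum_{\gamma \mid \beta} |(\OO/\gamma\OO)^\times|$, where the sum is over normalized divisors of $\beta$.

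First I would set up a bijection between roots. The key observation is that $\vf(\alpha\db) = \vf(\alpha'\db)$ iff $\alpha \equiv \alpha' \bmod \beta$ (this is exactly the injectivity established in the proof of Theorem~\ref{phiPropsThm}(4)), so the roots of $xP_\beta(x^4)$ are canonically indexed by residue classes $[\alpha] \in \OO/\beta\OO$. The plan is to sort these residues according to their "order": for each $[\alpha]$, let $\gamma = \beta/\gcd(\alpha,\beta)$, normalized to satisfy $\gamma \equiv 1 \bmod 2(1+i)$. Writing $\alpha = (\beta/\gamma)\alpha'$ with $\alpha'$ coprime to $\gamma$, one checks that $\alpha\db \equiv \alpha'\delta_\gamma \bmod L$ where $\delta_\gamma = \frac{(1+i)\vp}{\gamma}$ is a $\gamma$-torsion generator, so that $\vf(\alpha\db)$ is precisely a root of $\LL_\gamma$. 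Conversely, every root $\vf(\alpha'\delta_\gamma)$ of $\LL_\gamma$ with $[\alpha'] \in (\OO/\gamma\OO)^\times$ arises this way from a unique $[\alpha] \in \OO/\beta\OO$ of exact order $\gamma$. This partitions the $\mathrm{N}(\beta)$ roots of the left-hand side into disjoint blocks indexed by normalized divisors $\gamma \mid \beta$, the block for $\gamma$ being exactly the root set of $\LL_\gamma$.

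To make this rigorous, I would verify three points. First, that the divisor $\gamma = \beta/\gcd(\alpha,\beta)$ is well-defined up to associates and hence has a unique normalized representative; this uses that $\OO$ is a PID. Second, that distinct normalized $\gamma \mid \beta$ give $\LL_\gamma$ with disjoint root sets — this follows because the order $\gamma$ of $[\alpha]$ is an invariant, so a root cannot belong to two different blocks. Third, that the multiplicities match: since $xP_\beta(x^4)$ is separable and each $\LL_\gamma$ is separable with the $\LL_\gamma$ pairwise coprime, both sides are squarefree, so equality of root sets gives equality of polynomials (both being monic). The counting identity on degrees then follows automatically, and in fact re-proves the arithmetic fact $\sum_{\gamma} |(\OO/\gamma\OO)^\times| = \mathrm{N}(\beta)$ as a consequence.

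The main obstacle I anticipate is the bookkeeping around normalized divisors and torsion orders in $\OO$, rather than any deep difficulty. Specifically, the delicate step is checking that the congruence $\alpha\db \equiv \alpha'\delta_\gamma \bmod L$ holds with $\alpha'$ coprime to $\gamma$ and that this correspondence is a genuine bijection respecting the $(\OO/\gamma\OO)^\times$-indexing of $\LL_\gamma$'s roots. Because the lemniscatic relation \eqref{vfzvfw} introduces the sign $(-1)^{m+n}$ and the oddness constraints, I would need to track these carefully — mirroring the argument in the proof of Theorem~\ref{phiPropsThm}(4) that forced $m+in$ to be even — to confirm that replacing $\alpha$ by a suitable odd associate does not disturb the value $\vf(\alpha\db)$. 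Once the order-based partition of residues is established cleanly, the factorization is immediate.
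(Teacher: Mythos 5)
Your proposal is correct and follows exactly the route the paper takes: the paper's proof simply says to reorganize the roots $\vf(\alpha\db)$, $[\alpha]\in\OO/\beta\OO$, using gcd's as in the cyclotomic factorization $x^n-1=\prod_{d\mid n}\Phi_d(x)$, and your partition of residues by the normalized $\gamma=\beta/\gcd(\alpha,\beta)$ is precisely that argument written out in detail. (One small simplification: the identity $\alpha\db=\alpha'\delta_{\gamma'}$ with $\gamma'$ an associate of $\gamma$ is an exact equality in $\C$, so the sign $(-1)^{m+n}$ from \eqref{vfzvfw} never enters; it is only needed for the injectivity already established in Theorem~\ref{phiPropsThm}(4).)
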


\begin{proof}
From Theorem \ref{phiPropsThm}, the roots of $xP_\beta(x^4)$ are
$\vf(\alpha\db)$ for $[\alpha]\in\OO/\beta\OO$. The proof consists of
using gcd's to reorganize the roots as in the analogous cyclotomic
result. We refer the reader to \cite[Prop.\ 9.1.5]{galois}.
\end{proof}

We can also determine the constant term of $\LL_\beta$.

\begin{proposition}
\label{constantterm}
Let $\beta\in\OO$ be odd and a nonunit. If $\beta = u\pi^k$ where
$u\in\OO^\times$, $\pi$ is a normalized prime, and $k \ge 1$, then
$\LL_\beta(0) = \pi$. In all other cases, $\LL_\beta(0) = 1$.
\end{proposition}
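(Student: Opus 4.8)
The plan is to transcribe into $\OO$ the classical computation of the constant term of a cyclotomic polynomial, which rests on the von Mangoldt identity $\prod_{d\mid n}d^{\mu(n/d)}=e^{\Lambda(n)}$, equal to $p$ when $n=p^k$ and to $1$ otherwise. Since $\LL_\beta=\LL_{\beta'}$ for associates, I may assume $\beta$ is normalized; in the prime-power case $\beta=u\pi^k$ the normalized associate is exactly $\pi^k$, so it suffices to treat normalized $\beta$ and prove $\LL_\beta(0)=\pi$ when $\beta=\pi^k$ and $\LL_\beta(0)=1$ otherwise.

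The one computational input I need is the constant term of $P_\beta$. Differentiating the identity $\vf(\beta z)=M_\beta(\vf(z))$ of Theorem~\ref{phiPropsThm} at $z=0$, the left side gives $\beta\vf'(0)=\beta$ and the right side gives $M_\beta'(0)\vf'(0)=M_\beta'(0)$, where I have used $\vf(0)=0$ and $\vf'(0)=1$ (the latter from \eqref{diffeq}); hence $\beta=M_\beta'(0)$. Since $M_\beta(x)=i^\ve x\,P_\beta(x^4)/Q_\beta(x^4)$ and $Q_\beta(0)=1$ — the latter because part (5) forces $Q_\beta(0)$ to be the leading coefficient of the monic $P_\beta$ — I get $M_\beta'(0)=i^\ve P_\beta(0)$, whence $P_\beta(0)=i^{-\ve}\beta$. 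For normalized $\beta$ we have $\ve=0$, so $P_\beta(0)=\beta$; this matches the prime case $P_\pi(0)=a_d=i^{-\ve}\pi$ of part (6).

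Next I would evaluate Proposition~\ref{decomp} at $x=0$. Since $(1+i)\vp\in L$, the factor for $\gamma=1$ is $\LL_1(x)=x-\vf((1+i)\vp)=x$; dividing $xP_\beta(x^4)=\prod_{\gamma\mid\beta}\LL_\gamma(x)$ by $x$ and setting $x=0$ yields $\beta=P_\beta(0)=\prod_{1\neq\gamma\mid\beta}\LL_\gamma(0)$, the product being over the normalized divisors $\gamma\neq1$ of the normalized $\beta$. Writing $g(\gamma)=\LL_\gamma(0)$ and setting $g(1)=1$, this reads $\prod_{\gamma\mid\beta}g(\gamma)=\beta$ for every normalized $\beta$. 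Defining $\Lambda(\gamma)=\pi$ for a normalized prime power $\gamma=\pi^k$ with $k\geq1$ and $\Lambda(\gamma)=1$ otherwise, a direct count gives $\prod_{\gamma\mid\beta}\Lambda(\gamma)=\prod_i\pi_i^{k_i}=\beta$ when $\beta=\prod_i\pi_i^{k_i}$. Because the relation $\prod_{\gamma\mid\beta}h(\gamma)=\beta$ together with $h(1)=1$ determines $h$ uniquely by induction on $\mathrm{N}(\beta)$, I conclude $g=\Lambda$, which is precisely the claimed dichotomy.

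The only real content is the identity $P_\beta(0)=i^{-\ve}\beta$; once that is secured, the remainder is the exact Gaussian-integer analog of the divisor bookkeeping behind $\Phi_n(0)$, and I expect no genuine obstacle there beyond keeping the normalization conventions straight.
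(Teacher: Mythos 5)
Your proposal is correct and follows essentially the same route as the paper: the two inputs are identical, namely the identity $P_\beta(0)=i^{-\ve}\beta$ (which the paper obtains for normalized $\beta$ via L'H\^opital on $\vf(\beta x)/\vf(x)$, equivalent to your differentiation of $\vf(\beta z)=M_\beta(\vf(z))$ at $0$) and the evaluation of Proposition~\ref{decomp} at $x=0$. Your packaging of the induction as uniqueness of the arithmetic function satisfying $\prod_{\gamma\mid\beta}h(\gamma)=\beta$ with $h(1)=1$ is a slightly tidier way to organize the paper's two-stage induction (prime powers, then the rest), but it is the same argument.
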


\begin{proof}
We may assume that $\beta$ is odd and normalized (this implies $i^\ve
= 1$ in part (1) of Theorem~\ref{phiPropsThm}.) Using $\varphi'(0) = 
1$ and L'H\^{o}spital's rule, we have
\begin{equation}
\label{LHospital}
\beta = \lim_{x\rightarrow 0}{\frac{\vf(\beta x)}{\vf(x)}} =
\lim_{x\rightarrow 0}{\frac{P_\beta(\vf(x)^4)}{Q_\beta(\vf(x)^4)}} =
\frac{P_\beta(0)}{Q_\beta(0)} = P_\beta(0)
\end{equation}
since $Q_\beta(0) = 1$ by part (5) of Theorem~\ref{phiPropsThm}.

Now let $\pi$ be a normalized odd prime.  If $\beta = \pi$, then
Proposition \ref{decomp} tells us that $\LL_\pi(x) =
P_\pi(x^4)$. Therefore, $\LL_\pi(0) = P_\pi(0) = \pi$ by
\eqref{LHospital}.
	
Next, if $\beta = \pi^k$, then we proceed by strong induction. Suppose
that for all $1\leq k \leq n$, we have $\LL_{\pi^k}(0)=\pi$. From
Proposition \ref{decomp} and our induction hypothesis we obtain
\[
\pi^{n+1} = P_{\pi^{n+1}}(0) = \prod_{k=1}^{n+1}{\LL_{\pi^k}(0)} =
\pi^n\LL_{\pi^{n+1}}(0). 
\]
We conclude that $\LL_{\pi^{n+1}}(0)=\pi$, completing our induction.
	
Finally, suppose $\beta$ is not a prime power. Again, Proposition
\ref{decomp} tells us that
\begin{equation}
\label{eqn6}
\begin{aligned}
\beta = P_\beta(0) = \hskip-9pt\prod_{\substack{\gamma\mid\beta,\,
    \gamma\neq 1\\ \gamma
    \text{normalized}}}{\hskip-8pt\LL_\gamma(0)}.
\end{aligned}
\end{equation}
Each prime power $\pi^k$ dividing $\beta$, where $\pi$ is a normalized
prime, contributes $\pi$ to the product (\ref{eqn6}). Thus the
normalized prime
power divisors of $\beta$ contribute a factor of $\beta$ to
(\ref{eqn6}). Therefore
\[
\prod_{\substack{\gamma\mid\beta,\, \gamma\neq 1,\, \gamma \text{
      normalized}\\ \gamma \text{ not a  
      prime power}}}{\hskip-20pt\LL_\gamma(0)} = 1.  
\]
Using this and strong induction on the number of prime factors of
$\beta$ (counted with multiplicity), it is now easy to prove that
$\LL_\beta(0) = 1$ when $\beta$ is not a prime power.  We leave the
details to the reader.
\end{proof}

\begin{remark}
The inspiration for Proposition~\ref{constantterm} derives from a
similar result for irreducible factors of Chebyshev polynomials proved in
\cite[Prop.\ 2]{Hsiao}.  We will explore the analogy between
lemnatomic polynomials and Chebyshev polynomials in
Section~\ref{ChebySec}.
\end{remark}

We now come to the main result of this section.

\begin{theorem}
\label{main}
If $\beta\in \OO$ is odd, then the lemnatomic polynomial $\LL_\beta$
is irreducible over $K$.  
\end{theorem}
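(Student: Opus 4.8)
The plan is to follow the classical proof of the irreducibility of cyclotomic polynomials as closely as the complex multiplication structure allows. We may assume that $\beta$ is a normalized nonunit, since $\LL_\beta = \LL_{\beta'}$ for associates and the unit case is trivial. Let $f \in \OO[x]$ be the monic minimal polynomial of $\vf(\db)$ over $K$; by Gauss's lemma $f$ is a monic factor of $\LL_\beta$ in $\OO[x]$. Writing $T = \{[\alpha] \in (\OO/\beta\OO)^\times : f(\vf(\alpha\db)) = 0\}$, part (4) of Theorem~\ref{phiPropsThm} shows that the roots in question are distinct, so $\deg f = |T|$ and $[1] \in T$. Since $\deg \LL_\beta = |(\OO/\beta\OO)^\times|$ by Lemma~\ref{features}, proving $f = \LL_\beta$---hence irreducibility---amounts to showing $T = (\OO/\beta\OO)^\times$. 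I would deduce this from two facts: (i) $T$ is closed under multiplication by the class of any normalized prime $\pi'$ coprime to $\beta$, and (ii) such classes generate $(\OO/\beta\OO)^\times$.

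For (i), the central input is that multiplication by $\pi'$ reduces to Frobenius modulo $\pi'$. Concretely, for a normalized prime $\pi'$ we have $\varepsilon = 0$ in Theorem~\ref{phiPropsThm}, and parts (5) and (6) give $P_{\pi'}(x^4) \equiv x^{\mathrm{N}(\pi')-1}$ and $Q_{\pi'}(x^4) \equiv 1 \pmod{\pi'}$, whence $M_{\pi'}(x) \equiv x^{\mathrm{N}(\pi')} \pmod{\pi'}$. Now suppose $[\alpha] \in T$ but $[\pi'\alpha] \notin T$, and write $\LL_\beta = f h$ in $\OO[x]$. Then $h$ vanishes at $\vf(\pi'\alpha\db) = M_{\pi'}(\vf(\alpha\db))$; clearing the denominator $Q_{\pi'}$ and using that $f$ is the minimal polynomial of $\vf(\alpha\db)$ shows $f(x) \mid Q_{\pi'}(x^4)^{\deg h}\,h(M_{\pi'}(x))$ in $\OO[x]$. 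Reducing modulo $\pi'$ and using $\bar h(x^{\mathrm{N}(\pi')}) = \bar h(x)^{\mathrm{N}(\pi')}$ (valid since $\mathrm{N}(\pi')$ is a power of the residue characteristic) gives $\bar f \mid \bar h^{\mathrm{N}(\pi')}$, so $\bar f$ and $\bar h$ share a root and $\overline{\LL_\beta} = \bar f\,\bar h$ has a repeated factor modulo $\pi'$. This contradicts the separability of $\LL_\beta$ modulo $\pi'$, which I expect to be the main obstacle: it follows because $\LL_\beta \mid xP_\beta(x^4)$, whose $\mathrm{N}(\beta)$ roots are the $\vf$-values of the $\beta$-torsion and remain distinct modulo any odd $\pi' \nmid \beta$ by good reduction of $E$ (equivalently, by checking that the relevant discriminant is prime to $\pi'$).

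For (ii), I would use primary representatives. Since the ideal $(2(1+i)) = ((1+i)^3)$ is coprime to $\beta$, the Chinese Remainder Theorem lets me solve $\gamma \equiv \alpha \pmod{\beta}$ and $\gamma \equiv 1 \pmod{2(1+i)}$ simultaneously for any $[\alpha] \in (\OO/\beta\OO)^\times$; the resulting $\gamma$ is normalized and coprime to $\beta$. A normalized element factors into normalized primes (the unit ambiguity is killed by the congruence modulo $2(1+i)$), and each such prime is coprime to $\beta$ because $\gamma$ is. Hence $[\alpha] = [\gamma]$ is a product of classes of normalized primes coprime to $\beta$, proving (ii); note that this also disposes of the unit classes $[\pm1],[\pm i]$, which is the only real subtlety. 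Combining (i) and (ii), $T$ contains the subgroup generated by these prime classes, namely all of $(\OO/\beta\OO)^\times$, so $f = \LL_\beta$ and $\LL_\beta$ is irreducible.

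Two remarks on the strategy. First, the irreducibility of $\LL_\beta$ immediately upgrades the injection of Proposition~\ref{preliminary} to an isomorphism $\gal(K_\beta/K) \simeq (\OO/\beta\OO)^\times$, giving the second proof of \eqref{mainison}. Second, an alternative treatment of the prime-power case $\beta = \pi^k$ sidesteps the generation step: the same Frobenius computation yields $P_{\pi^k}(x^4) \equiv x^{\mathrm{N}(\pi)^k - 1} \pmod{\pi}$, so by Proposition~\ref{decomp} one gets $\LL_{\pi^k} \equiv x^{\deg \LL_{\pi^k}} \pmod{\pi}$, and since $\LL_{\pi^k}(0) = \pi$ by Proposition~\ref{constantterm}, $\LL_{\pi^k}$ is Eisenstein at $\pi$. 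One would then pass to general $\beta$ by showing the fields $K_{\pi^k}$ are linearly disjoint over $K$, using that $\pi$ is totally ramified in $K_{\pi^k}$ but unramified in the remaining factors---a route whose own main obstacle is again the separability of the division polynomial modulo the odd primes away from $\pi$.
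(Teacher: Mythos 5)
Your proposal is, at its core, the paper's own argument: take a monic irreducible factor $f$ of $\LL_\beta$ in $\OO[x]$, show its root set is closed under multiplication by normalized primes $\pi'$ coprime to $\beta$ using the congruences $P_{\pi'}(x^4)\equiv x^{\mathrm{N}(\pi')-1}$, $Q_{\pi'}(x^4)\equiv 1 \bmod \pi'$ from parts (5) and (6) of Theorem~\ref{phiPropsThm}, and then use the Chinese remainder theorem to see that such primes generate $(\OO/\beta\OO)^\times$. Your step (i) packages the paper's comparison of $f_\pi$ with $\widetilde f_\pi$ as the divisibility $\bar f\mid \bar h^{\mathrm{N}(\pi')}$, which is fine. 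The one genuine gap is the separability hypothesis you need in step (i): you require $\LL_\beta$ to be separable modulo \emph{every} normalized prime $\pi'$ coprime to $\beta$, and you justify this only by an appeal to ``good reduction of $E$'' or ``checking that the relevant discriminant is prime to $\pi'$.'' That is not adequate as stated. The delicate case is a split residue characteristic $p$ with $\pi'\nmid\beta$ but $\overline{\pi'}\mid\beta$: then $p$ divides $\mathrm{N}(\beta)$, so the standard fact that reduction is injective on prime-to-$p$ torsion does not apply, and one must invoke Deuring's reduction theory to identify which of the two primes above $p$ cuts out the connected part of the kernel of reduction, and then also control the degree-two covering $z\mapsto\vf(z)$ on the reduced curve. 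The statement you want is true, but it is a nontrivial piece of CM theory rather than a routine check, and it is exactly the kind of input the elementary proof is designed to avoid.

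The repair is small and is what the paper does: impose on $\pi'$ only the condition $\pi'\nmid\mathrm{disc}(\LL_\beta)$, for which separability modulo $\pi'$ is immediate from the separability of $\LL_\beta$ over $K$ (Theorem~\ref{phiPropsThm}, part (4)); then in your step (ii) add the congruence $\gamma\equiv 1\bmod\eta$, where $\eta$ is the product of the odd primes dividing $\mathrm{disc}(\LL_\beta)$ but not $\beta$, so that every normalized prime factor of $\gamma$ automatically satisfies the separability condition. With that modification your argument goes through verbatim. Your Eisenstein alternative for $\beta=\pi^k$ (via $\LL_{\pi^k}\equiv x^{\deg\LL_{\pi^k}}\bmod\pi$ and $\LL_{\pi^k}(0)=\pi$) is a nice observation, but, as you yourself note, assembling general $\beta$ from prime powers by linear disjointness requires knowing that $\pi$ is unramified in $K_\gamma$ for $\pi\nmid\gamma$, which is the same unproved separability in another guise; so it does not circumvent the gap.
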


\begin{proof}
We will follow the strategy used in \cite[Thm.\ 9.1.9]{galois} to
prove that the cyclotomic polynomial $\Phi_n$ is irreducible over
$\Q$.
  
Since $\LL_\beta \in \OO[x]$ is monic, Gauss's lemma implies that an
irreducible factor $f$ of $\LL_\beta$ in $K[x]$ lies in $\OO[x]$, and
we can assume that $f$ is also monic.  The roots of $f$ have the form
$\vf(\alpha_i\db)$ for $\alpha_i \in \OO$ with
$\gcd(\alpha_i,\beta)=1$ and $1\leq i \leq r$ for some $r$. For 
$\pi\in\OO$ an odd prime, consider the polynomial
\[
f_\pi(x) = \prod_{i=1}^r{(x-\vf(\pi\alpha_i\db))}.
\]
Recall that $xP_\beta(x^4)$ is separable by
Theorem~\ref{phiPropsThm}.  It follows that $\LL_\beta$ is separable
over $K$ and hence is separable modulo any prime of $\OO$ not dividing
$\mathrm{disc}(\LL_\beta) \ne 0$. For our purposes, we assume that the
prime $\pi \in \OO$ satisfies
\begin{enumerate}
	\item[(A)] $\pi\equiv 1 \bmod{2(1+i)}$.
	\item[(B)] $\LL_\beta$ is separable modulo $\pi$.
	\item[(C)] $\gcd(\pi,\beta) = 1$.
\end{enumerate}

First, we show that $f_\pi\in \OO[x]$. Let $\sigma\in\gal(K_\beta/K)$
and take a root $\vf(\pi\alpha_i\db)$ of $f_\pi$.  Since $\alpha_i\db$ and
$\pi\alpha_i\db$ are $\beta$-torsion generators,
Proposition~\ref{preliminary} implies that there is $\gamma \in \OO$
relatively prime to $\beta$ such that
\begin{equation}
\label{sigmaroots}
\begin{aligned}
\sigma(\vf(\alpha_i\db)) &= \vf(\gamma\alpha_i\db)\\
\sigma(\vf(\pi\alpha_i\db)) &= \vf(\gamma\pi\alpha_i\db) =
\vf(\pi\gamma\alpha_i\db). 
\end{aligned}
\end{equation}
Since $\vf(\alpha_i\db)$ is a root of $f \in \OO[x]$, the first line of
\eqref{sigmaroots} implies that $\vf(\gamma\alpha_i\db)$ is a root as
well.  Then $\vf(\pi\gamma\alpha_i\db)$ is a root of $f_\pi$ by
definition.  But this root is $\sigma(\vf(\pi\alpha_i\db))$ by the
second line of \eqref{sigmaroots}.  It follows that $\sigma$ permutes
the roots of $f_\pi$, and then $f_\pi(x)\in \OO[x]$ follows easily.

We next prove the following claim: 
\begin{equation}
\label{claim}
f = f_\pi \text{ when $\pi$ satisfies (A), (B) and (C).}
\end{equation}
Suppose \eqref{claim} is false.  Since $f$ and $f_\pi$ are monic of
the same degree and $f$ is irreducible, $f \ne f_\pi$ implies that $f$
is coprime to $f_\pi$. The roots of $f_\pi$ are also roots of
$\LL_\beta$, so there exists a monic $h\in \OO[x]$ such that
$\LL_\beta = ff_\pi h$. Let us analyze the roots of $f_\pi
\bmod{\pi}$. {F}rom parts (5) and (6) of Theorem~\ref{phiPropsThm}, we
have
\begin{equation}
\label{candoit}
\begin{aligned}
P_\pi(x^4) &= x^{\mathrm{N}(\pi)-1} + a_1x^{\mathrm{N}(\pi)-5} + \dots +
a_{(\mathrm{N}(\pi)-1)/4}\\ 
Q_\pi(x^4) &= a_{(\mathrm{N}(\pi)-1)/4}x^{\mathrm{N}(\pi)-1} + \dots +
a_1x^4 + 1, 
\end{aligned}
\end{equation}
where $\pi$ divides all the coefficients $a_j$. Let $\OO_{K_\beta}$ be
the ring of integers of $K_\beta$ and let $\fp$ be
a prime ideal of $\OO_{K_\beta}$ dividing $\pi\OO_{K_\beta}$ and
suppose that $\vf(\alpha_i\db)$ is a root of $f$. Therefore
\[
\vf(\pi\alpha_i\db) =
\vf(\alpha_i\db)
\frac{P_\pi\big(\vf(\alpha_i\db)^4\big)}{Q_\pi\big(\vf(\alpha_i\db)^4\big)}  
\]
in $K_\beta$. By (\ref{candoit}), we may reduce this equality
modulo $\fp$ to get 
\begin{equation}
\label{artin}
\vf(\pi\alpha_i\db) \equiv \vf(\alpha_i\db)^{\mathrm{N}(\pi)} \bmod \fp.
\end{equation}
Let 
\[
\widetilde{f}_\pi(x) =
\prod_{i=1}^r{\big(x-\vf(\alpha_i\db)^{\mathrm{N}(\pi)}\big)}. 
\]
Then an argument similar to that given for $f_\pi$ shows that
$\widetilde{f}_\pi(x)\in\OO[x]$. By \eqref{artin}, $f_\pi(x)$ and
$\widetilde{f}_\pi(x)$ have the same roots in
$\OO_{K_\beta}/\fp$ and hence 
\[
f_\pi \equiv \widetilde{f}_\pi \bmod \pi \OO[x]. 
\]
However, $\widetilde{f}_\pi$ is obtained from $f$ by raising its roots
to the $\mathrm{N}(\pi)^{\mathrm{th}}$ power, and a standard argument
implies that
\[
 \widetilde{f}_\pi \equiv f \bmod \pi \OO[x].
 \]
(See \cite[Lem.\ 9.1.8]{galois} for the case of a prime $p \in \Z$.
The proof for a prime $\pi \in \OO$ is identical.)  Combining these
congruences shows that $f_\pi \equiv f \bmod \pi\OO[x]$.  Therefore,
\[
\LL_\beta \equiv ff_\pi h \equiv f^2h \bmod{\pi \OO[x]},
\]
which is to say that $\LL_\beta$ is not separable modulo $\pi$. This
contradicts our choice of $\pi$ and allows us to conclude
\eqref{claim}.

Now consider a root $\vf(\alpha_i\db)$ of $f$. Let $\eta$ be the product
of all odd primes dividing $\mathrm{disc}(\LL_\beta)$ but not dividing
$\beta$. If $\vf(\gamma\db)$ is any root of $\LL_\beta$, then
the Chinese remainder theorem implies that there is a Gaussian integer
$\omega$ such that
\[
\begin{array}{ccll}
\omega \hskip-5pt&\equiv &\hskip-5pt \gamma\alpha^{-1}_i
&\hskip-7pt\bmod{\,\beta}\\ 
\omega \hskip-5pt&\equiv &\hskip-5pt 1 &\hskip-7pt\bmod{\,2(1+i)}\\
\omega \hskip-5pt&\equiv &\hskip-5pt 1 &\hskip-7pt\bmod{\,\eta}.
\end{array}
\]
Therefore $\omega$ is odd and we may factor $\omega$ as $\omega =
\pi_1\cdots \pi_k$ for odd, normalized primes $\pi_j$ coprime to
$\beta\hskip1pt\mathrm{disc}(\LL_\beta)$. Iterating (\ref{claim}) we have
\[
f = f_{\pi_1} = f_{\pi_1\pi_2} = \cdots = f_{\pi_1\cdots\pi_k}.
\]
Hence
\[
\vf(\gamma\db) = \vf(\gamma\alpha^{-1}_i\alpha_i\db) =
\vf(\omega\alpha_i\db) = \vf(\pi_1\cdots\pi_k\alpha_i\db) 
\]
is a root of $f_{\pi_1\cdots\pi_k}=f$. Thus $\LL_\beta$ and $f$ have
the same roots; since both are monic and separable they must be
equal. We conclude that $\LL_\beta$ is irreducible over $K$ as
desired.
\end{proof}

\begin{corollary}
\label{maingalois}
Let $\beta\in \OO$ be odd. Then
\[
\gal(K_\beta/K)\simeq (\OO/\beta \OO)^\times.
\]
In particular, if $n$ is a positive, odd integer, then
\[
\gal(K(\vf({\textstyle\frac{2\vp}{n}}))/K) \simeq (\OO/n\OO)^\times.
\]
\end{corollary}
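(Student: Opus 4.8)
The plan is to deduce the isomorphism by a degree count, since all the essential work has already been done in Theorem~\ref{main} and Proposition~\ref{preliminary}. First I would observe that by Proposition~\ref{fieldKbeta}, $K_\beta = K(\vf(\db))$, where $\db = \frac{(1+i)\vp}{\beta}$ is a $\beta$-torsion generator and $\vf(\db)$ is a root of $\LL_\beta$. Since $\LL_\beta$ is irreducible over $K$ by Theorem~\ref{main} and is monic of degree $|(\OO/\beta\OO)^\times|$ by Lemma~\ref{features}, it is the minimal polynomial of $\vf(\db)$ over $K$. Hence
\[
[K_\beta : K] = \deg \LL_\beta = |(\OO/\beta\OO)^\times|.
\]

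Next, recall that the field $K_\beta/K$ is Galois, as noted following Proposition~\ref{fieldKbeta}. Therefore $|\gal(K_\beta/K)| = [K_\beta:K] = |(\OO/\beta\OO)^\times|$. On the other hand, Proposition~\ref{preliminary} supplies an injective homomorphism $\gal(K_\beta/K)\hookrightarrow (\OO/\beta\OO)^\times$. An injection between finite groups of equal cardinality is automatically a bijection, so this map is an isomorphism, which gives the first claim.

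For the ``in particular'' statement, I would simply set $\beta = n$. The proof of Proposition~\ref{fieldKbeta} already shows that $\frac{2\vp}{n}$ is a $\beta$-torsion generator and that $K_n = K(\vf(\frac{2\vp}{n}))$, so the general result applies directly with $(\OO/n\OO)^\times$ in place of $(\OO/\beta\OO)^\times$.

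I do not expect any genuine obstacle in this corollary: the difficulty was concentrated entirely in establishing the irreducibility of $\LL_\beta$ (Theorem~\ref{main}) and the injectivity of the Galois representation (Proposition~\ref{preliminary}). The remaining argument follows the standard cyclotomic pattern — irreducibility pins down the degree of the extension, and an injective homomorphism of finite groups with matching orders must be an isomorphism. The only point requiring any care is matching the degree of $\LL_\beta$ with the order of the group exactly, which is supplied precisely by Lemma~\ref{features}.
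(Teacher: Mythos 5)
Your argument is correct and matches the paper's own proof essentially step for step: irreducibility of $\LL_\beta$ (Theorem~\ref{main}) plus Lemma~\ref{features} pins down $[K_\beta:K] = |(\OO/\beta\OO)^\times|$, and the injection of Proposition~\ref{preliminary} between finite groups of equal order is then an isomorphism. The specialization to $\beta = n$ is also handled exactly as in the paper.
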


\begin{proof}
From Proposition \ref{preliminary}, we have an injection
\[
	\gal(K_\beta/K) \hookrightarrow (\OO/\beta \OO)^\times.
\]
Since $K_\beta = K(\vf(\db))$, Theorem \ref{main} and Lemma
\ref{features} tell us that  
\[
|\gal(K_\beta/K)|= [K_\beta:K] = [K(\vf(\db)):K] = \deg(\LL_\beta(x))
= |(\OO/\beta \OO)^\times|. 
\]
Therefore, our injection must be an isomorphism.  The final assertion
follows since $K_n = K(\vf(\frac{2\vp}{n}))$ by definition.
\end{proof}

The final assertion of Corollary~\ref{maingalois} completes the
elementary proof of the isomorphism \eqref{mainison} from the
Introduction. 

\begin{remark}
Let $\pi \in \OO$ be an odd prime not dividing $\beta$, so that $\pi$
is unramified in $K_\beta$.  Under the isomorphism of 
Corollary~\ref{maingalois}, some $\sigma \in \gal(K_\beta/K)$ maps to
$[\pi] \in (\OO/\beta\OO)^\times$, where $\sigma$ and $\pi$ are linked
via the equation
\[
\sigma(\vf(\db)) = \vf(\pi\db).
\]
If we assume in addition that $\mathrm{N}(\pi)$ is relatively prime to the
index $[\OO_{K_\beta}\! : \OO[\vf(\db)]]$, then \eqref{artin} easily
implies that 
\[
\sigma(u) \equiv u^{\mathrm{N}(\pi)} \bmod \fp
\]
for any $u \in \OO_{K_\beta}$ and any prime $\fp$ of
$\OO_{K_\beta}$ dividing $\pi\OO_{K_\beta}$.  Since $K_\beta/K$ is
abelian, the Artin symbol $((K_\beta/K)/\pi)$ is the unique element of
$\gal(K_\beta/K)$ with this property.  Hence we have proved that
\[
\left(\frac{K_\beta/K}{\pi}\right)(\vf(\db)) = \vf(\pi\db),
\]
which shows that the isomorphism $\gal(K_\beta/K) \simeq
(\OO/\beta\OO)^\times$ from Corollary~\ref{maingalois} can be
identified with the Artin map. 

Observe the key role played by \eqref{artin} here and in the proof of
Theorem~\ref{main}.  If you look back at the proof, you will see that
\eqref{artin} follows  from \eqref{candoit}.  Proving \eqref{candoit}
is actually the  hardest part of the proof of Theorem~\ref{main}.  The
proof of \eqref{candoit} given in \cite[Thm.\ 15.4.8]{galois} follows
Eisenstein's original argument, which is both intricate and
brilliant. 
\end{remark}

\begin{example}
Let us work out the case $n=5$.  The $5$-division polynomial is
\[
xP_5(x^4) = x^{25}+50x^{21}-125x^{17}+300x^{13}-105x^9-62x^5+5x.
\]
We can factor $5$ into normalized primes as $5 = (-1+2i)(-1-2i)$.  Then 
\begin{align*}
xP_5(x^4) &= \LL_1(x) \,\LL_{-1+2i}(x)\, \LL_{-1-2i}(x)\,\LL_5(x)\\
&= x(x^4-1+2i)(x^4-1-2i)(x^{16}+52x^{12}-26x^8-12x^4+1).
\end{align*}
Note that the constant terms of the factors are as predicted by
Proposition~\ref{constantterm}. 
\end{example}

\section{Abel's Theorem on the Lemniscate}
\label{AbelThmSec}

Our work in the previous section allows us to give a new, concise
proof of Abel's wonderful result about ruler and compass constructions
on the lemniscate. 

\begin{theorem}[Abel's Theorem on the Lemniscate]
\label{abel}
For a positive integer $n$, the $n$-division points of the lemniscate may be constructed using
ruler and compass if and only if 
\[
n=2^kp_1\cdots p_m,
\]
where the $p_i$ are distinct Fermat primes and $k$ is a nonnegative
integer. 
\end{theorem}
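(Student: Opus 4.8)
The plan is to translate the geometric statement into one about field degrees and then read off the answer from Corollary~\ref{maingalois}. First I would recall the standard criterion from the theory of ruler-and-compass constructions: a complex number $\alpha$ is constructible if and only if it lies in a subfield of $\C$ obtained from $\Q$ by a tower of quadratic extensions, equivalently if and only if the Galois closure of $\Q(\alpha)/\Q$ has degree a power of $2$. The first-quadrant $n$-division point has radial coordinate $\vf(\frac{2\vp}{n})$, which is real, so I would recast the problem as: the $n$-division points are constructible if and only if the Galois closure $M$ of $\Q(\vf(\frac{2\vp}{n}))/\Q$ has $2$-power degree. Since the full set of $n$-division points is stable under complex conjugation, $K_n/\Q$ is Galois and contains $M$; because $K_n = \Q(i)\cdot M$ with $[\Q(i):\Q]=2$, we have $[K_n:M]\le 2$, so $[M:\Q]$ is a power of $2$ if and only if $[K_n:\Q]$ is. As $i$ is itself constructible, constructibility of the $n$-division points is thus equivalent to $[K_n:\Q]$ being a power of $2$.

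Next I would reduce to the odd case. Writing $n = 2^k m$ with $m$ odd, I would show the factor $2^k$ is harmless: the $n$-division points are constructible if and only if the $m$-division points are. Geometrically this is the assertion that a lemniscate arc can always be bisected by ruler and compass, so that dividing into $m$ equal arcs and then bisecting $k$ times produces $2^k m = n$ equal arcs; conversely, multiplying a $2^k m$-division point by $2^k$ via iterated use of the duplication law $\vf(2z) = \frac{2\vf(z)\vf'(z)}{1+\vf(z)^4}$ recovers an $m$-division point, and this is a constructible operation since $\vf'(z)$ is, up to sign, the square root $\sqrt{1-\vf(z)^4}$. Algebraically, the relevant $2$-power extension is governed by the ramified prime $1+i$: because $2$ is an associate of $(1+i)^2$, the group controlling the $2^k$-part has order $|(\OO/2^k\OO)^\times| = |(\OO/(1+i)^{2k}\OO)^\times| = 2^{2k-1}$, so no new odd degree can enter.

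Now I would treat the odd modulus $m$ using Corollary~\ref{maingalois}, which gives $\gal(K_m/K) \simeq (\OO/m\OO)^\times$ and hence $[K_m:\Q] = 2\,|(\OO/m\OO)^\times|$. By the preceding reductions, constructibility of the $m$-division points is equivalent to $|(\OO/m\OO)^\times|$ being a power of $2$. I would compute this order by factoring $m = \prod_j q_j^{a_j}$ into rational primes and combining the Chinese remainder theorem with the splitting of odd primes in $\OO = \Zi$: a split prime $q \equiv 1 \bmod 4$ contributes $(q^{a-1}(q-1))^2$, while an inert prime $q \equiv 3 \bmod 4$ contributes $q^{2a-2}(q^2-1)$.

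Finally I would determine exactly when every such factor is a power of $2$. Any exponent $a \ge 2$ introduces an odd factor $q$, so each $a_j = 1$ and $m$ is squarefree. For a split prime this leaves $q-1$, which is a power of $2$ precisely when $q$ is a Fermat prime; for an inert prime it leaves $q^2 - 1 = (q-1)(q+1)$, and since each factor must then itself be a power of $2$ while they differ by $2$, this forces $q = 3$, which is the Fermat prime $F_0 = 2^{2^0}+1$. Hence $|(\OO/m\OO)^\times|$ is a power of $2$ if and only if $m$ is a product of distinct Fermat primes, and combining this with the free factor $2^k$ yields the theorem. I expect the main obstacle to be the reduction of the $2^k$-part in the second paragraph: since Corollary~\ref{maingalois} is proved only for odd moduli, the constructibility of iterated bisection has to be justified separately, either through the explicit square-root recursion above or through a direct analysis of the $(1+i)$-power torsion.
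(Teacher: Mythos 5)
Your proposal is correct and follows essentially the same route as the paper: reduce to odd $n$ via bisection and duplication of lemniscatic arcs, identify $[K_n:\Q]$ (equivalently $2\deg\LL_n$) with $2\,|(\OO/n\OO)^\times|$ using Corollary~\ref{maingalois}, and then determine by elementary number theory exactly when that order is a power of $2$. The one step you flag as the main obstacle---that a lemniscatic arc can be bisected by ruler and compass---is precisely the step the paper also outsources (to \cite[Prop.\ 15.2.3]{galois}), and your proposed square-root recursion does close it, since the duplication law yields a quadratic equation for $\vf(z/2)^4$ over $\Q(\vf(z))$.
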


\begin{proof}
We begin by reducing the equivalence. According to
\cite[Prop.\ 15.1.1]{galois}, a point on the lemniscate is
constructible if and only if its radial component is
constructible. Therefore, constructing the $n$-division points is
equivalent to constructing $\vf(\frac{2\vp a}{n})$ for $a=1,\ldots,
n$. We need not worry about all these points, as
\cite[Cor.\ 15.2.7]{galois} states that if $\vf(\frac{2\vp}{n})$ is
constructible, then $\vf(\frac{2\vp a}{n})$ is constructible for all
$a\in\Z$. Finally, \cite[Prop.\ 15.2.3]{galois} allows us to conclude
that $\vf(z)$ is constructible if and only if $\vf(z/2)$ is
constructible. Hence we may assume $n$ is odd.

Next observe that $\LL_n \in \Z[x]$.  This follows since
lemnatomic polynomials satisfy $\overline{\LL_\beta} =
\LL_{\overline{\beta}}$ with respect to complex conjugation.  Then
$\LL_n$ is irreducible over $\Q$ since it is irreducible over $K$ by
Theorem~\ref{main}.  Furthermore, $\vf(\frac{2\vp}{n})$ is a root of
$\LL_n$ since $\frac{2\vp}{n}$ is an $n$-torsion generator (see the
proof of Proposition~\ref{fieldKbeta}).   It follows that  
\begin{equation}
\label{degreeformula}
[\Q(\vf({\textstyle\frac{2\vp}{n}})):\Q]\! = \!\deg(\LL_n)\! =
\!|(\OO/n\OO)^\times|\! =\! n^2\prod_{p\mid n}
\left(\!1-\frac1p\right)
\left(\!1-\left(\frac{-1}{p}\right)\frac1p\right), 
\end{equation}
where the last equality follows from \cite[Ex.\ 7.29]{P}.

Now assume that $\vf(\frac{2\vp}{n})$ is constructible.  Then
$[\Q(\vf(\frac{2\vp}{n})):\Q]$ is a power of $2$ by
\cite[Cor.\ 10.1.8]{galois}, so that by \eqref{degreeformula}, we have 
\[
2^m = n^2\prod_{p\mid n} \left(1-\frac1p\right)
\left(1-\left(\frac{-1}{p}\right)\frac1p\right). 
\]
Since $n$ is odd, this implies that $n$ is a product of distinct
primes $p$.  If $(-1/p) = -1$, then $(p-1)(p+1)$ is a power of $2$,
which forces $p = 3$, and if $(-1/p) = 1$, then $(p-1)^2$ is a power
of $2$, which easily implies that $p$ is a Fermat prime. 

Conversely, assume that $n$ is a product of distinct Fermat primes.
Observe that $\varphi(\frac{2\vp}{n}) \in K(\varphi(\frac{2\vp}{n})) =
K_n$, which is  Galois over $\Q$ since $K_n/K$ is Galois and
$\varphi(\frac{2\vp}{n})$ is real.  Then \eqref{degreeformula} shows
that  
\[
[K_n : \Q] = 2[\Q(\vf({\textstyle\frac{2\vp}{n}})):\Q] 
\]
is a power of $2$.  Hence $\vf(\frac{2\vp}{n})$ is constructible by
\cite[Thm.\ 10.1.12]{galois}. 
\end{proof}

Other proofs of this theorem are due to Abel, Eisenstein and Rosen,
though Rosen was the first to prove the ``only if'' part of the
theorem.  Rosen's proof \cite{Rosen} uses class field theory, while
the above proof uses only the irreducibility of the lemnatomic
polynomial $\LL_n$, which was proved without class field theory in
Theorem~\ref{main}.  The proofs of Abel and Eisenstein are discussed
in \cite[Sec.\ 15.5]{galois}, with references to the original papers.
See also the book \cite{pands} by Prasolov and Solovyev.

\section{Chebyshev Polynomials}
\label{ChebySec}

The analogy between Abel's function $\vf(z)$ and the sine function
$\sin(\theta)$ has been recognized since the time of Gauss and Abel.  For
example, in his unpublished work on elliptic functions, Gauss wrote
$\vf(z)$ as $\sin \hskip1pt \mathrm{lemn}\hskip1pt z$ (see
\cite{agm}). Thus the analogy between lemnatomic polynomials and
cyclotomic polynomials made earlier in this paper needs to be
reconsidered from the point of view of the sine function.  As we now
show, the analog of the division polynomial $xP_\beta(x^4)$ is the
well-known \emph{Chebyshev polynomial} $T_n \in \Z[x]$, which is
defined by the identity 
\[
	\cos(n\theta) = T_n(\cos(\theta))
\]
for a positive integer $n$.  

\begin{lemma}
\label{sinTn}
If $n$ is odd, then
\[
\sin(n\theta) = (-1)^{(n-1)/2} \,T_n\big(\sin(\theta)\big).
\]
Furthermore, there is $S_n \in \Z[x]$ such that
\[
T_n(x)  = x\,S_n(x^2),
\]
and the roots of $T_n(x) = x\,S_n(x^2)$ are $\sin(a\frac{2\pi}{n})$ for
$[a] \in \Z/n\Z$. 
\end{lemma}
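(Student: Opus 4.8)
The plan is to derive all three assertions from the defining identity $\cos(n\theta) = T_n(\cos\theta)$ by standard trigonometric substitutions, treating $n$ as a fixed odd integer throughout. First I would establish the sine identity by the substitution $\theta \mapsto \frac{\pi}{2} - \theta$. Since $\cos\bigl(n(\frac{\pi}{2}-\theta)\bigr) = \cos\bigl(\frac{n\pi}{2} - n\theta\bigr)$ and $n$ is odd, the angle $\frac{n\pi}{2}$ is an odd multiple of $\frac{\pi}{2}$, so this collapses to $\pm\sin(n\theta)$ with the sign depending on $n \bmod 4$; a short check shows the sign is exactly $(-1)^{(n-1)/2}$. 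On the other side, $T_n(\cos(\frac{\pi}{2}-\theta)) = T_n(\sin\theta)$, which yields $\sin(n\theta) = (-1)^{(n-1)/2} T_n(\sin\theta)$ after moving the sign across.

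Next I would prove the factorization $T_n(x) = x\,S_n(x^2)$ with $S_n \in \Z[x]$. The cleanest route is to observe that $T_n$ is an odd polynomial when $n$ is odd: since $\cos(n\theta)$ and $\cos\theta$ are related by $T_n$, replacing $\theta$ by $\pi - \theta$ sends $\cos\theta \mapsto -\cos\theta$ and $\cos(n\theta) \mapsto \cos(n(\pi-\theta)) = \cos(n\pi - n\theta) = (-1)^n\cos(n\theta) = -\cos(n\theta)$, so $T_n(-x) = -T_n(x)$. An odd polynomial with integer coefficients has only odd-degree terms and is divisible by $x$, so we may write $T_n(x) = x\,S_n(x^2)$ with $S_n \in \Z[x]$; this uses that $T_n$ is monic up to the known leading coefficient $2^{n-1}$ and has integer coefficients, both standard facts about Chebyshev polynomials.

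Finally I would identify the roots of $T_n(x) = x\,S_n(x^2)$ as $\sin\bigl(a\frac{2\pi}{n}\bigr)$ for $[a] \in \Z/n\Z$. Setting $\theta = a\frac{2\pi}{n}$ in the sine identity gives $T_n(\sin\theta) = (-1)^{(n-1)/2}\sin(n\theta) = (-1)^{(n-1)/2}\sin(2\pi a) = 0$, so each such $\sin\bigl(a\frac{2\pi}{n}\bigr)$ is a root. It remains to check that these account for all $n$ roots: $T_n$ has degree $n$, so I must verify that the values $\sin\bigl(a\frac{2\pi}{n}\bigr)$ for $[a] \in \Z/n\Z$ are exactly $n$ distinct real numbers. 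The main (and only real) obstacle is this distinctness count, since $\sin$ is not injective on a full period; I would handle it by noting that as $a$ runs over $\Z/n\Z$ the points $a\frac{2\pi}{n}$ are equally spaced on the circle, and the standard coincidences $\sin\alpha = \sin(\pi - \alpha)$ pair up angles whose indices sum to $n/2 \pmod n$ — but $n$ is odd, so no two distinct residues $a, a'$ give $a + a' \equiv 0$ and $a + a' \equiv n$ simultaneously in a way that forces a collision, and a direct check confirms the $n$ values are distinct. Once distinctness is established, these $n$ roots exhaust the degree-$n$ polynomial $T_n$, completing the proof.
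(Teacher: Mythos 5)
Your proposal is correct and follows essentially the same route as the paper: the sine identity comes from the same $\pi/2$-shift of the defining cosine identity with the sign $(-1)^{(n-1)/2}$ extracted from $n$ being odd, and the factorization $T_n(x)=x\,S_n(x^2)$ from a parity argument (you show $T_n$ is odd; the paper equivalently notes $\sin(n\theta)/\sin(\theta)$ is even). The only difference is that you spell out the distinctness of the $n$ roots $\sin(a\tfrac{2\pi}{n})$ via the parity obstruction to $\sin\alpha=\sin(\pi-\alpha)$, a step the paper declares easy and omits.
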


\begin{proof} 
Since $n$ is odd, the addition laws for $\sin$ and $\cos$ imply that
\begin{align*}
\sin(n\theta) &= \cos\big(n\theta-{\textstyle\frac{\pi}{2}}\big) = 
\cos\big(n(\theta-{\textstyle\frac{\pi}{2}}) + 
{\textstyle\frac{n-1}{2}}\pi\big) =
\cos\big(n(\theta-{\textstyle\frac{\pi}{2}})\big)(-1)^{(n-1)/2}\\ &=
(-1)^{(n-1)/2}\,T_n\big(\cos(\theta-{\textstyle\frac{\pi}{2}})\big) =
(-1)^{(n-1)/2} \,T_n\big(\sin(\theta)\big). 
\end{align*}
The formula for $\sin(n\theta)$ implies that $0$ is a root of $T_n$.
Since $\sin(n\theta)/\sin(\theta)$ is an even function, it follows
that $T_n(x)/x$ is also even and hence is a polynomial in $x^2$.   The
assertion concerning the roots of $T_n$ is equally easy and is
omitted. 
\end{proof}

Notice how $\sin(n\theta) =
(-1)^{(n-1)/2}\,\sin(\theta)\,S_n(\sin(\theta)^2)$, $n$ odd, is
analogous to the formula for $\vf(\beta z)$, $\beta$ odd, from part
(1) of Theorem~\ref{phiPropsThm}, where $n \equiv (-1)^{(n-1)/2} \bmod
4$ corresponds to $\beta \equiv i^\ve \bmod 2(1+i)$.  Also, the
crucial identity \eqref{phiIdentity} used in the proof of
Theorem~\ref{CFTthm} is the lemniscatic analog of the identity
$\cos(\theta - \frac{\pi}2) = \sin(\theta)$ used in the proof of
Lemma~\ref{sinTn}.

The analogy between $\varphi(z)$ and $\sin(\theta)$ actually begins
with the equations $r^2 = \cos(2\theta)$ and $r = \cos(\theta)$ that
define the lemniscate and circle of radius $1/2$ centered at
$(1/2,0)$.  Table 1 shows how these curves lead naturally to the
functions $\varphi(z)$ and $\sin(\theta)$ and also records some of the
similarities between their properties.  (See also
\cite[8.2]{Lemmermeyer} for another view of the analogy between
$\varphi(z)$ and $\sin(\theta)$.) 

\begin{table}[t]
	\centering
	\begin{tabular}{|c|c|c|}
		\hline
		& Lemniscate\rule{0pt}{11pt} & Circle \\[2pt]
		\hline 
		Polar equation\rule{0pt}{11pt} & $r^2 = \cos(2\theta)$
                & $r = \cos(\theta)$\\[2pt] 
		\hline
		\raisebox{37pt}{Graph} &
                $\includegraphics[height=1in]{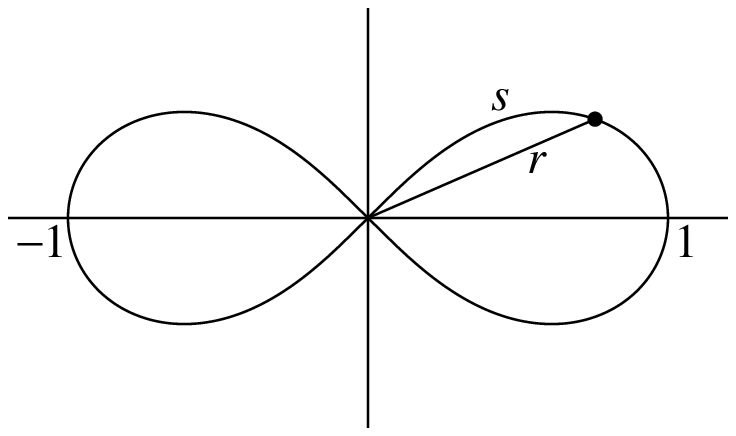}$ &
                $\includegraphics[height=1in]{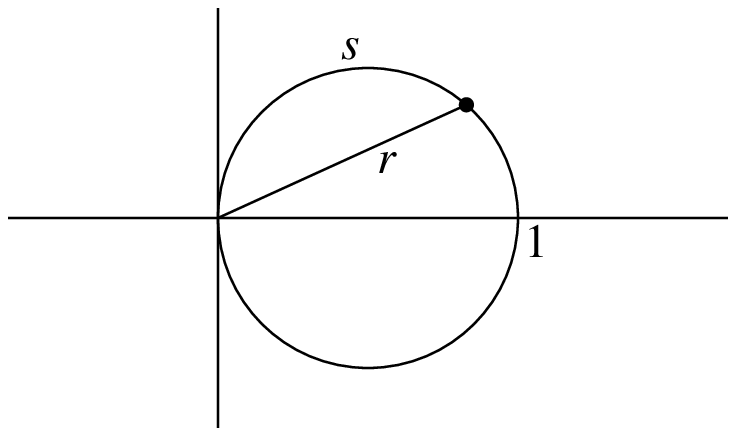}$\\ 
		\hline
		Arc length\rule{0pt}{17.5pt} & $s =
                {\displaystyle\int_0^r{\frac{dt}{\sqrt{1-t^4}}}}$ & $s =
                {\displaystyle\int_0^r{\frac{dt}{\sqrt{1-t^2}}}}$ \\[9pt] 
		\hline
		Inverse function\rule{0pt}{11pt} & $r = \vf(s)$ & $r =
                \sin(s)$ \\[2pt] 
	\hline
		\raisebox{-4pt}{Differential} &  &  \\
		\raisebox{6.5pt}{equation} & \raisebox{9pt}{$\vf'(z)^2
                  + \vf(z)^4 = 1$} & \raisebox{9pt}{$\cos(\theta)^2 +
                  \sin(\theta)^2 = 1$} \\[2pt] 
		\hline 
		\raisebox{-2pt}{Multiplication}\rule{0pt}{11pt} &
                $\vf(\beta z) =  $ & $\sin(n\theta) = $ \\ 
		\raisebox{3pt}{map ($\beta,n$ odd)} &
                $(-1)^{\ve}\vf(z)\,\frac{P_\beta(\vf(z)^4)}{Q_\beta(\vf(z)^4)}$
                & $(-1)^{(n-1)/2}\sin(\theta)\,S_n(\sin(\theta)^2)$
                \\[4pt] 
		\hline
		\raisebox{-1pt}{Galois group}\rule{0pt}{12pt} &
                $\gal(K(\vf(\tfrac{2\vp}{\beta}))/K)$ &
                $\gal(\Q(\sin(\tfrac{2\pi}{n}))/\Q)$ \\ 
		\raisebox{3pt}{($\beta,n$ odd)}\rule{0pt}{10pt} &
                $\simeq (\OO/\beta\OO)^\times$ & $\simeq
                (\Z/n\Z)^\times$ \\[2pt] 
		\hline
	\end{tabular}
	\medskip
\caption{The Lemniscate and the Circle}
\end{table}

This analogy suggests in particular that the lemnatomic polynomials
$\LL_\beta$ should correspond to the irreducible factors of the
Chebyshev polynomial $T_n$ when $n$ is odd.  The factors of $T_n$
were studied by Hsiao \cite{Hsiao} in 1984.  Since $T_n$ is not
monic, he used the monic polynomial 
\[
C_n(x) = 2\hskip1pt T_n(x/2) \in \Z[x].
\]
Hsiao determines the irreducible factorization of $C_n$ over $\Q$
\cite[Prop.\ 1]{Hsiao}.  When $n$ is odd, his result may be restated
as a factorization 
\begin{equation}
\label{factorCn}
C_n = \prod_{k|n} D_k,
\end{equation}
where $D_n$ has degree $\phi(n)$ (the Euler $\phi$-function) and is
given by 
\[
D_n(x) = \!\!\!\!\prod_{[a] \in (\Z/n\Z)^\times} \!\!\big(x -
2\sin(a{\textstyle\frac{2\pi}{n}})\big). 
\]
Note the analogy with Definition~\ref{lempolydef}.  Thus, when $n$ is
odd, \eqref{factorCn} is the analog of Proposition~\ref{decomp}, where
the irreducibility of $D_n$ corresponds to the irreducibility of
$\LL_\beta$ proved in Theorem~\ref{main}.  Although Hsiao's proof of
irreducibility ultimately rests on the the irreducibility of
cyclotomic polynomials, we note that the polynomials $D_n$ may be
shown irreducible over $\Q$ directly by adapting the proof of
Theorem~\ref{main}.  We also note that in one of his unpublished
papers, Schur \cite[p.\ 423]{Schur} mentions very briefly (without
proof) the irreducible decomposition of Chebyshev polynomials

Hsiao also studied the constant term of the factors of $C_n$.  When
$n$ is odd, his result can be stated as follows
\cite[Prop.\ 2]{Hsiao}.   

\begin{proposition}
\label{HsiaoConstant}
Let $n\in\Z$ be odd and positive. If $n=p^k$ for a prime $p$, then
$|D_n(0)| = p$. Otherwise, $D_n(0)=1$. 
\end{proposition}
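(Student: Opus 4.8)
The plan is to follow the proof of Proposition~\ref{constantterm} essentially verbatim, replacing each lemnatomic ingredient by its sine-function shadow. The first step is to produce the Chebyshev analog of the identity $P_\beta(0) = \beta$ from \eqref{LHospital}. Starting from $\lim_{\theta \to 0} \sin(n\theta)/\sin(\theta) = n$ together with the factorization $\sin(n\theta) = (-1)^{(n-1)/2}\sin(\theta)\,S_n(\sin(\theta)^2)$ of Lemma~\ref{sinTn}, I would divide by $\sin(\theta)$ and let $\theta \to 0$ to obtain
\[
S_n(0) = (-1)^{(n-1)/2}\, n.
\]
Since only the absolute value $|D_n(0)|$ is at issue, the sign $(-1)^{(n-1)/2}$ will be harmless throughout.

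Next I would pass from $S_n$ to the monic factors $D_k$ of \eqref{factorCn}. From $T_n(x) = x\,S_n(x^2)$ and $C_n(x) = 2\,T_n(x/2)$ one computes $C_n(x) = x\,S_n(x^2/4)$. In the factorization $C_n = \prod_{k\mid n} D_k$, the only factor with a root at the origin is $D_1(x) = x$: for odd $k > 1$ the roots $2\sin(a\tfrac{2\pi}{k})$ with $\gcd(a,k)=1$ are all nonzero, since $\sin(a\tfrac{2\pi}{k}) = 0$ would force $k \mid 2a$ and hence $k \mid 2$. Dividing out $D_1$ therefore gives $S_n(x^2/4) = \prod_{k \mid n,\, k > 1} D_k(x)$, and setting $x = 0$ and taking absolute values yields
\[
n = \prod_{\substack{k \mid n\\ k > 1}} |D_k(0)|,
\]
which is the exact analog of \eqref{eqn6}.

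From here the argument is purely multiplicative, just as in Proposition~\ref{constantterm}. For a prime power $n = p^k$ I would use strong induction: the base case $n = p$ gives $|D_p(0)| = |S_p(0)| = p$, and the inductive step reads $p^{k} = \prod_{i=1}^{k} |D_{p^i}(0)| = p^{k-1}\,|D_{p^k}(0)|$, forcing $|D_{p^k}(0)| = p$. For a general odd $n$, the prime-power divisors $p, p^2, \dots, p^{e_p}$ attached to each prime $p \mid n$ contribute $\prod_{i} |D_{p^i}(0)| = p^{e_p}$, so the prime-power divisors together account for the full factor $n$; hence the remaining, non-prime-power divisors multiply to $1$, and an induction on the number of prime factors of $n$ gives $|D_n(0)| = 1$ whenever $n$ is not a prime power.

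The one point requiring genuine care — the main obstacle, such as it is — lies in the first two steps: establishing $S_n(0) = (-1)^{(n-1)/2} n$ cleanly and confirming that $D_1$ is the unique factor of $C_n$ vanishing at the origin, so that the product formula $n = \prod_{k\mid n,\, k>1}|D_k(0)|$ holds exactly. Once that formula is in hand, the remainder is the same bookkeeping used to prove Proposition~\ref{constantterm}, and indeed Proposition~\ref{HsiaoConstant} is precisely the sine-function reflection of that result.
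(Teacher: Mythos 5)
The paper does not actually prove this proposition: it quotes it from Hsiao \cite[Prop.\ 2]{Hsiao}, remarking only that it is the Chebyshev reflection of Proposition~\ref{constantterm}. Your proposal supplies a self-contained proof by transporting the argument of Proposition~\ref{constantterm}, and the skeleton is sound: the limit computation gives $S_n(0)=(-1)^{(n-1)/2}n$ exactly as \eqref{LHospital} gives $P_\beta(0)=\beta$; the identity $C_n(x)=x\,S_n(x^2/4)$ is correct; $D_1(x)=x$ is indeed the unique factor vanishing at the origin (your divisibility argument for odd $k>1$ is right); and the resulting relation $S_n(0)=\prod_{k\mid n,\,k>1}D_k(0)$ is the precise analog of \eqref{eqn6}. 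The prime-power induction then yields $|D_{p^k}(0)|=p$ as claimed.

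There is one genuine loose end. In the non-prime-power case the proposition asserts $D_n(0)=1$, not merely $|D_n(0)|=1$, and by discarding signs at the outset you can only conclude $D_n(0)=\pm1$. (Indeed, once you know each $D_k(0)$ with $k>1$ is a nonzero integer, the relation $n=\prod_{k\mid n,\,k>1}|D_k(0)|$ together with the prime-power computation already forces $|D_k(0)|=1$ for every non-prime-power divisor with no induction at all; the induction is only needed to pin down the sign, which is exactly where your argument is silent.) The fix is to keep the signs throughout: the character $\chi(n)=(-1)^{(n-1)/2}$ is completely multiplicative on odd integers because $(m-1)(m'-1)\equiv 0\bmod 4$, the signed prime-power induction gives $D_{p^j}(0)=\chi(p)\,p$, and then the prime-power divisors of $n$ contribute exactly $\chi(n)\,n=S_n(0)$ to the product, so the non-prime-power divisors multiply to $+1$; your induction on the number of prime factors counted with multiplicity then yields $D_n(0)=1$. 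This sign bookkeeping is precisely the role played by the normalization $\gamma\equiv 1\bmod 2(1+i)$ in Proposition~\ref{constantterm}, and it matches the paper's observation that $(-1)^{(p-1)/2}p\equiv 1\bmod 4$ is the analog of a normalized prime.
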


By tweaking Hsiao's proof, one can show that when $p$ is an odd prime
and $k \ge 1$, we have $D_{p^k}(0) = (-1)^{(p-1)/2}\, p$.  Since
$(-1)^{(p-1)/2}\, p \equiv 1 \bmod 4$, we see that $(-1)^{(p-1)/2}\,
p$ is the analog of a normalized prime $\pi \in \OO$, so that
Proposition~\ref{HsiaoConstant} is the analog of
Proposition~\ref{constantterm}.  In fact,
Proposition~\ref{constantterm} was inspired by Hsiao's result.


For further results on the factorization of Chebyshev polynomials, see
\cite{RTW}.  There is also another interesting analogy to consider,
this one involving the \emph{Carlitz polynomials} $[M](X)$ for $M \in
\mathbb{F}_p[T][X]$.  The irreducible factors of Carlitz polynomials
have a lot in common with lemnatomic and cyclotomic polynomials.  See
\cite{conrad} for more details.  

For more on the history and number theory
associated to the lemniscate, the reader should consult Schappacher's
article \emph{Some Milestones of Lemniscatomy} \cite{Sch2}.

\section*{Acknowledegements}

The results of Section~\ref{LemPolySec} are based on the senior honors
thesis of the second author, written under the direction of the first
author.  We are grateful to Amherst College for the Post-Baccalaureate
Summer Research Fellowship that supported the writing of this paper.
We would like to thank Rosario Clement and Keith Conrad for useful
conversations.  The comments by Keith Conrad, Franz Lemmermeyer and
Michael Rosen on earlier versions of the paper are greatly
appreciated.

\bibliographystyle{model1b-num-names}

\hbadness=10000
\tolerance=10000

\end{document}